\newtheorem{theorem}{Theorem}[section]
\newtheorem{corollary}[theorem]{Corollary}
\newtheorem{lemma}[theorem]{Lemma}
\newtheorem{proposition}[theorem]{Proposition}
\theoremstyle{definition}
\newtheorem{definition}[theorem]{Definition}
\newtheorem{example}[theorem]{Example}
\newtheorem{remark}[theorem]{Remark}
\numberwithin{equation}{section}
\newcommand{\R}{\mathbb R}
\newcommand{\N}{\mathbb N}
\newcommand{\B}{\mathbb B}
\newcommand{\X}{\mathbb X}
\newcommand{\Y}{\mathbb Y}
\newcommand{\nullv}{\mathbf{0}}
\newcommand{\Sfer}{\mathbb S}
\newcommand{\extr}{{\rm ext}\, }
\newcommand{\cl}{{\rm cl}\, }
\newcommand{\inte}{{\rm int}\, }
\newcommand{\fr}{{\rm bd}\, }
\newcommand{\diamet}{{\rm diam}\, }
\newcommand{\Lagr}{{\rm L}}
\newcommand{\gap}{{\rm gap}\, }
\newcommand{\Coneone}{{\rm C}^{1,1}}
\newcommand{\Uniconvc}{{\mathcal UC}^2_c}
\newcommand{\Uniconv}{{\mathcal UC}^2}
\newcommand{\dif}[1]{{\rm D}{#1}}
\newcommand{\immap}[1]{f_{{\mathcal P},{#1}}}
\newcommand{\ball}[2]{{\rm B}\left(#1, #2\right)}
\newcommand{\dist}[2]{{\rm dist}\left(#1,#2\right)}
\newcommand{\reg}[2]{{\rm reg}({#1},{#2})}
\newcommand{\lip}[2]{{\rm lip}({#1},{#2})}
\newcommand{\der}[2]{{\rm D}{#1}({#2})}
\newcommand{\ncone}[2]{{\rm N}({#1},{#2})}
\title[An extension of Polyak convexity principle with application]{An extension of the
Polyak convexity principle with application to nonconvex optimization}
\author[A. Uderzo]{Amos Uderzo}
\address[A. Uderzo]{Dept. of Mathematics and Applications,
University of Milano-Bicocca, Italy}
\email{{\tt amos.uderzo@unimib.it}}
\keywords{Modulus of convexity, uniformly convex sets, openness at a linear
rate/metric regularity, constrained optimization, strong duality}
\subjclass[2010]{49J27, 49K27, 52A05, 90C26, 90C46, 90C48}
\date{\today}
\begin{document}

\begin{abstract}
The main problem considered in the present paper is to single out classes
of convex sets, whose convexity property is preserved under nonlinear
smooth transformations. Extending an approach due to B.T. Polyak, the present
study focusses on the class of uniformly convex subsets of Banach spaces.
As a main result, a quantitative condition linking the modulus of convexity
of such kind of set, the regularity behaviour around a point of a nonlinear
mapping and the Lipschitz continuity of its derivative is established, which
ensures the images of uniformly convex sets to remain uniformly convex.
Applications of the resulting convexity principle to the existence of solutions,
their characterization and to the Lagrangian duality theory in constrained
nonconvex optimization are then discussed.
\end{abstract}

\maketitle



\section{Introduction}

In many fields of mathematics, persistence phenomena of specific
geometrical properties under various kind of transformations have been often
a subject of interest and study. Transformations, when possible formalized
by mappings acting among spaces, sometimes have been classified on the basis
of features in a structure that they can preserve (whence the very term
``morphism'').
Convexity is a geometrical property which emerged in ancient times, at
the very beginning of geometry, and since then remained essentially unchanged for
almost two millennia and half. This happened by virtue of the great variety
of successful applications that it found in many different areas.
In particular, the relevant role played by convexity in optimization
and control theory is widely recognized. This led to develop a branch
of mathematics, called convex analysis, that elected convexity
as its main topic of study. In spite of such an interest and motivations,
not much seems to be known up to now about phenomena
of persistence of convexity under nonlinear transformations. Yet, advances
in this direction would have a certain impact on the analysis of
optimization problems.
Historically, the first results somehow connected with the issue at the
study relate to the numerical range of quadratic mappings (namely,
mappings whose components are quadratic forms) and can be found in
\cite{Dine41} (see also \cite{Poly98}). A notable step ahead was made when the preservation
of convexity of small balls under smooth regular transformations
between Hilbert spaces was established by B.T. Polyak (see \cite{Poly01}).
After that, some other contributions to understanding the phenomenon
in a similar context were given by \cite{BacSam09,BoEmKo04,Dyma16,Reis07}.
Various applications of it to topics in linear algebra, optimization and control
theory are presented in \cite{Poly98,Poly01,Poly01b,Poly03,Reis07}

In the present paper, by following the approach introduced by B.T. Polyak,
the study of classes of sets with persistent convexity properties
is carried on. More precisely, the analysis here proposed focusses on the class
of uniformly convex subsets of certain Banach spaces. An interest in
similar classes of sets, in connection with the problem under study, appears
already in \cite{Poly01}, where strongly convex sets are actually mentioned.
This seems to be rather natural, inasmuch as elements of such classes
share the essential geometrical features of balls in a Hilbert space:
nonempty interior, boundedness and, what plays a crucial role,
a uniform rotundity, which implies a boundary consisting of extreme points only.
The feature last mentioned is captured and quantitatively expressed by the notion
of modulus of convexity of a set. In developing the Polyak's approach,
the main idea behind the investigations exposed in the paper is that,
if the modulus of convexity of a given
set matches the smoothness and the regularity property of a given
nonlinear mapping, then the persistence of convexity under that mapping
can be guaranteed. The understanding of such a fundamental relation
between quantitative aspects of the convexity property for a set
and the quantitative regularity behaviour of a mapping acting on it
should shed light on the general phenomenon under study.
Concretely, this leads to enrich the class of sets interested by
the phenomenon. In turn, since the persistence of convexity under
nonlinear transformations is at the origin of a certain qualification
(in terms of solution existence and characterization)
observed in optimization problems with possibly nonconvex data,
the result here established allows one to enlarge the class of
problems for which the consequent benefits can be expected.

The contents of the paper are arranged in the next sections as follows.
In Section \ref{Sec:2}, the notion of modulus of convexity of a set
and of uniformly convexity are recalled, along with several examples
and related facts, useful for the subsequent analysis. Besides, the
regularity behaviour of a nonlinear smooth mapping, namely its openness
at a linear rate, is entered as a crucial tool, along with the related
exact bound.
In Section \ref{Sec:3}, the main result of the paper, which is an
extension of the aforementioned convexity principle due to B.T. Polyak,
is established and some of its features are discussed.
In Section \ref{Sec:4}, some applications of the main result to nonconvex
constrained optimization problems are provided.


\section{Notations and preliminaries}    \label{Sec:2}

The basic notations in use throughout the paper are as follows.
$\R$ denotes the real number set. Given a metric space $(X,d)$,
an element $x_0\in X$ and $r\ge 0$, $\ball{x_0}{r}=\{x\in X:\ d(x,x_0)
\le r\}$ denotes the (closed) ball with center $x_0$ and radius $r$.
In particular, in a Banach space, the unit ball centered at the
null  vector will be indicated by $\B$, whereas the unit sphere by
$\Sfer$. The distance of $x_0\in X$ from a set $S\subseteq X$
is denoted by $\dist{x_0}{S}$. If $S\subseteq X$, $\ball{S}{r}=
\{x\in X:\ \dist{x}{S}\le r\}$ denotes the (closed) $r$-enlargement
of $S$. The diameter of a set $S\subseteq
X$ is defined as $\diamet S=\sup\{d(x_1,x_2):\ x_1,\, x_2\in S\}$.
By $\inte S$, $\cl S$ and $\fr S$ the topological interior, the
closure and the boundary of a set $S$ are marked, respectively.
If $S$ is a subset of a Banach space $(\X,\|\cdot\|)$, $\extr S$
denotes the set of all extreme points of $S$, in the sense
of convex analysis, $\nullv$ stands for the null element of $\X$
and $[x_1,x_2]$ denotes the closed line segment with endpoints
$x_1,\, x_2\in\X$. Given a function $h:X\longrightarrow Y$ between
metric spaces and a set $U\subseteq X$, $h$ is said to be
Lipschitz continuous on $U$ if there exists a constant $\ell>0$
such that
\begin{equation}     \label{in:defLip}
  d(h(x_1),h(x_2))\le\ell d(x_1,x_2),\quad \forall x_1,\, x_2\in U.
\end{equation}
The infimum over all values $\ell$ making the last
inequality satisfied on $U$ is called exact bound of Lipschitz continuity
of $h$ on $U$ and is denoted by $\lip{h}{U}$, i.e.
$$
  \lip{h}{U}=\inf\{\ell\ge 0:\ \hbox{ inequality $(\ref{in:defLip})$ holds}\}.
$$
The Banach space of all bounded linear operators between the Banach spaces $\X$
and $\Y$, equipped with the operator norm, is denoted by
$(\mathcal{L}(\X,\Y),\|\cdot\|_\mathcal{L})$. If, in particular,
it is $\Y=\R$, the simpler notation $(\X^*,\|\cdot\|_*)$ is used.
The null vector in a dual space is marked by $\nullv^*$, whereas
the unit sphere by $\Sfer^*$, with $\langle\cdot,\cdot\rangle$
marking the duality pairing a space and its dual.
Given a mapping $f:\Omega\longrightarrow\Y$, with $\Omega$ open
subset of $\X$, and $x_0\in\Omega$, the Gat\^eaux derivative of $f$ at $x_0$ is
denoted by $\der{f}{x_0}$. If $f$ is Gat\^eaux differentiable
at each point of $\Omega$ and the mapping $\dif{f}:\Omega\longrightarrow
\mathcal{L}(\X,\Y)$ is Lipschitz continuous on $\Omega$, $f$ is said
to be of class $\Coneone(\Omega)$.

\begin{remark}     \label{rem:smoothfacts}
(i) In view of a subsequent employment, let us recall that, whenever
$f:\Omega\longrightarrow\Y$ is a mapping of class $\Coneone(\Omega)$
between Banach spaces, with $\Omega$ open subset of $\X$ and $x_1,\,
x_2\in\Omega$ are such that $[x_1,x_2]\subseteq\Omega$, the
following estimate holds true (see, for instance, \cite[Lemma 2.7]{Uder13})
\begin{equation}    \label{in:2ndordest}
    \left\|{f(x_1)+f(x_2)\over 2}-f\left({x_1+x_2\over 2}\right)
    \right\|\le {\lip{\dif{f}}{\Omega}\over 8}\|x_1-x_2\|^2,
\end{equation}
where $\lip{\dif{f}}{\Omega}$ denotes the exact bound of
Lipschitz continuity of $\dif{f}$ on $\Omega$.

(ii) It is not difficult to see that, if $S\subseteq\Omega$ is
a bounded set, i.e. $\diamet S<+\infty$, and $f\in\Coneone
(\Omega)$, then it must be
$$
   \sup_{x\in S}\|\der{f}{x}\|_\mathcal{L}<+\infty.
$$
Furthermore, if in addition $S$ is convex, then letting
$\beta_S=\sup_{x\in S}\|\der{f}{x}
\|_\mathcal{L}$, as an immediate consequence of the
mean-value theorem, one obtains
$$
   \diamet f(S)\le\beta_S\diamet S,
$$
that is $f(S)$ is bounded too.
\end{remark}

\subsection{Uniformly convex sets}

\begin{definition}      \label{def:uniconv}
(i) Let $S\subseteq\X$ be a nonempty, closed and convex subset
of a real Banach space. The function $\delta_S:[0,\diamet S)\longrightarrow
[0,+\infty)$ defined by
$$
    \delta_S(\epsilon)=\sup\left\{\delta\ge 0:\ \ball{{x_1+x_2\over 2}}{\delta}
              \subseteq S,\ \forall x_1,\, x_2\in S:\ \|x_1-x_2\|=\epsilon\right\}
$$
is called {\it modulus of convexity} of the set $S$. Whenever the value of
$\diamet S$ is attained at some pair $x_1,\, x_2\in S$, the function
$\delta_S$ will be meant to be naturally extended to $[0,\diamet S]$.

(ii) After \cite{Poly66}, a nonempty, closed and convex set $S\subseteq\X$, with $S\ne\X$, is said
to be {\it uniformly convex} provided that
$$
    \delta_S(\epsilon)>0,\quad\forall \epsilon\in
   \left\{ \begin{array}{ll}
                  (0,\diamet S], &\quad\hbox{if $\diamet S$ is attained on $S$}, \\
                  (0,\diamet S), &\quad\hbox{otherwise.}
                \end{array}   \right.
$$
\end{definition}

Since $\diamet S$ vanishes if $S$ is a singleton, Definition
\ref{def:uniconv} (ii) does not exclude such kind of convex sets.
Nevertheless, as singletons are of minor interest in connection
with the problem at the issue, henceforth a uniformly convex set
will be always assumed to contain at least two distinct points.

\begin{example}      \label{ex:ucset}
(i) Balls in a uniformly convex Banach space may be viewed as a paradigma
for the notion of uniform convexity for sets. Recall that, after
\cite{Clar36}, a Banach space $(\X,\|\cdot\|)$ is said to be
{\it uniformly convex} (or to have a uniformly convex norm) if
$$
    \delta_{\X}(\epsilon)=\inf\left\{1-\left\|{x_1+x_2\over 2}\right\|:\
                   x_1,\, x_2\in\B,\ \|x_1-x_2\|=\epsilon\right\}>0,
    \forall\epsilon\in (0,2].
$$
The function $\delta_\X$ is called modulus of convexity of the space
$(\X,\|\cdot\|)$. In fact, it is possible to prove that
$$
   \delta_\B(\epsilon)=\delta_\X(\epsilon),\quad\forall
   \epsilon\in (0,2].
$$
Such classes of Banach spaces as $l^p$ and $L^p$, with
$1<p<\infty$, are known to consist of uniformly convex spaces. In
particular, every Hilbert space is uniformly convex.
Since every uniformly convex Banach space must be reflexive
(according to the Milman-Pettis Theorem),
the spaces $l^1,\, L^1,\, L^\infty,\, C([0,1])$ and $c_0$ fail
to be. For $p\ge 2$, the exact expression of the modulus of
convexity of the spaces $l^p$ and $L^p$ is given by
$$
    \delta_{l^p}(\epsilon)=\delta_{L^p}(\epsilon)=
    1-\left[1-\left({\epsilon\over 2}\right)^p\right]^{1/p},
   \quad\forall\epsilon\in (0,2].
$$
For more details on uniformly convex Banach spaces and properties
of their moduli the reader may refer to
\cite{Dies75,FaHaHaMoPeZi01,Milm71}.
A useful remark enlightening the connection between the notions
of uniform convexity for sets and uniform convexity of Banach spaces
can be found in \cite[Theorem 2.3]{BalRep09}:
a Banach space can contain a closed uniformly convex set iff it admits an
equivalent uniformly convex norm. Such class of Banach spaces have been
characterized in terms of superreflexivity in \cite{Enfl72}.
Throughout the present paper, the Banach space $(\X,\|\cdot\|)$ will be
supposed to be equipped with a uniformly convex norm.

(ii) After \cite{Polo94,Polo96}, given a positive real $r$,
a subset $S\subseteq\X$ of a Banach space is said to be
{\it $r$-convex} (or {\it strongly convex} of radius $r$)
if there exists $M\subseteq\X$, with $M\ne\X$, such that
$$
  S=\bigcap_{x\in M}\ball{x}{r}\ne\varnothing.
$$
It is readily seen that, if a Banach space $(\X,\|\cdot\|)$ is
uniformly convex with modulus $\delta_\X$, then any strongly
convex set $S\subseteq\X$ with radius $r$ is uniformly convex
and its modulus of convexity satisfies the relation
\begin{eqnarray}      \label{in:ucmscs}
    \delta_S(\epsilon)\ge r\delta_\X\left({\epsilon\over r}\right),
    \quad\forall \epsilon\in (0,\diamet S).
\end{eqnarray}

(iii) Let $\theta:[0,+\infty)\longrightarrow [0,+\infty)$ be an
increasing function vanishing only at $0$. Recall that, according
to \cite{Zali02}, a function $\varphi:\X\longrightarrow\R$ is said
to be {\it uniformly convex with modulus $\theta$} if it holds
\begin{eqnarray*}
   \varphi(tx_1+(1-t)x_2)\le t\varphi(x_1)+(1-t)\varphi(x_2)
   -t(1-t)\theta(\|x_1-x_2\|),\\ \forall x_1,\, x_2\in\X,
   \,\forall t\in [0,1].
\end{eqnarray*}
If, in particular, it is $\theta(s)=\kappa s^2$, a uniformly convex
function with such a modulus is called {\it strongly convex}.
Sublevel sets of Lipschitz continuous uniformly convex functions
are uniformly convex sets. More precisely, given $\alpha>0$, if
$\varphi$ is Lipschitz continuous on $\X$, with exact bound
$\lip{\varphi}{\X}>0$, then the set $[\varphi\le\alpha]=\{x\in\X:
\ \varphi(x)\le\alpha\}$ turns out to be uniformly convex with
modulus
\begin{eqnarray}    \label{in:ucmucfunct}
    \delta_{[\varphi\le\alpha]}(\epsilon)\ge
    {\theta(\epsilon)\over 4\lip{\varphi}{\X}},
    \quad\forall\epsilon \in (0,\diamet [\varphi\le\alpha]).
\end{eqnarray}
Indeed, fixed $\epsilon\in (0,\diamet [\varphi\le\alpha])$,
take $x_1,\, x_2\in [\varphi\le\alpha]$, with $x_1\ne x_2$ and
$\|x_1-x_2\|=\epsilon$, and set $\bar x={1\over 2}(x_1+x_2)$.
By the uniform convexity of $\varphi$ with modulus $\theta$
one has
$$
   \varphi(\bar x)\le {\varphi(x_1)+\varphi(x_2)\over 2}-
   {\theta(\|x_1-x_2\|)\over 4}.
$$
Therefore, for an arbitrary $\eta>0$, by the Lipschitz continuity
of $\varphi$ on $\X$, one finds
\begin{eqnarray*}
   \varphi(x)&=& \varphi(x)-\varphi(\bar x)+\varphi(\bar x) \\
   &\le&
   (\lip{\varphi}{\X}+\eta){\theta(\epsilon)\over
   4(\lip{\varphi}{\X}+\eta)}+\alpha-
   {\theta(\epsilon)\over 4}\le\alpha,
\end{eqnarray*}
for every $x\in\ball{\bar x}{{\theta(\epsilon)\over 4
(\lip{\varphi}{\X}+\eta)}}$. Thus, it results in
$$
  \ball{\bar x}{{\theta(\epsilon)\over 4
  (\lip{\varphi}{\X}+\eta)}}\subseteq
  [\varphi\le\alpha],
$$
so
$$
  \delta_{[\varphi\le\alpha]}(\epsilon)\ge{\theta(\epsilon)
\over 4(\lip{\varphi}{\X}+\eta)}.
$$
The estimate in $(\ref{in:ucmucfunct})$ follows by arbitrariness
of $\eta$.
\end{example}

It is not difficult to see that, given two subsets $S_1$ and $S_2$
of $\X$, it is $\delta_{S_1\cap S_2}\ge\min\{\delta_{S_1},\, \delta_{S_2}\}$.
Therefore, the class of uniformly convex sets is closed under finite
intersection. In contrast, unlike the class of convex sets, this
class fails to be closed with respect to the Cartesian product.
It is worth noting that, as the intersection of balls may yield
a boundary with corners or a nonsmooth description, uniformly
convex sets may exhibit such kind of pathology.

In the next remark, some known facts about uniformly convex
sets are collected, which will be relevant to the subsequent
analysis.

\begin{remark}    \label{rem:strconvfacts}
(i) Every uniformly convex set, which does not coincide with the
entire space, is bounded (see \cite{BalRep09}).

(ii) Directly from Definition \ref{def:uniconv}, it follows
that every uniformly convex set has nonempty interior. This
fact entails that, while uniformly convex subsets are compact
if living in finite-dimensional spaces, they can not be so
in infinite-dimensional Banach spaces.

(iii) As a consequence of Definition \ref{def:uniconv},
if any uniformly convex set $S$ admits a modulus of convexity
of power type $2$, i.e. such that
\begin{eqnarray}   \label{in:qgc}
    \delta_S(\epsilon)\ge c\epsilon^2,
    \quad\forall \epsilon\in (0,\diamet S),
\end{eqnarray}
for some $c>0$, then it fulfils the following property:
for every $\tilde c\in (0,c)$ it holds
$$
    \ball{{x_1+x_2\over 2}}{\tilde c\|x_1-x_2\|^2}\subseteq S,
    \quad\forall x_1,\, x_2\in S.
$$
It is worth noting that this happens for the balls in
any Hilbert space or in the Banach spaces $l^p$ and $L^p$,
with $1<p<2$, where the following estimate is known to hold
$$
  \delta_{l^p}(\epsilon)=\delta_{L^p}(\epsilon)>
  {p-1\over 8}\epsilon^2,\quad\forall\epsilon\in (0,2]
$$
(see, for instance, \cite{Milm71}). Such a subclass of uniformly
convex sets will play a prominent role in the main result of
the paper.

(iv) For every uniformly convex set $S$, a constant $\beta>0$ can
be proved to exist such that
$$
   \delta_S(\epsilon)\le \beta\epsilon^2,\quad\forall
  \epsilon\in (0,\diamet S)
$$
(see \cite{BalRep09}). Thus, a modulus of convexity of the power
$2$ is a maximal one.
\end{remark}

The next proposition provides a complete characterization of uniform
convexity for subsets of a finite-dimensional Euclidean space in terms
of extremality of their boundary points. Below, a variational proof
of this fact is provided.

\begin{proposition}     \label{pro:ucchar}
A convex compact subset $S\subseteq\R^n$, with nonempty interior,
is uniformly convex iff $\extr S=\fr S$.
\end{proposition}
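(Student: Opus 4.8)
The plan is to prove the two implications separately, working in $\R^n$ with its Euclidean norm and exploiting compactness throughout.

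For the easy direction, suppose $S$ is uniformly convex and let $x\in\fr S$. To see that $x$ is extreme, assume $x=\frac12(x_1+x_2)$ with $x_1,x_2\in S$, $x_1\ne x_2$, and set $\epsilon=\|x_1-x_2\|>0$. By Definition \ref{def:uniconv}, since $S$ is uniformly convex we have $\delta_S(\epsilon)>0$, hence $\ball{x}{\delta}\subseteq S$ for some $\delta>0$, contradicting $x\in\fr S$. Thus every boundary point is extreme; the inclusion $\extr S\subseteq\fr S$ is automatic for a convex set with nonempty interior (an interior point is a midpoint of a nondegenerate segment inside $S$), so $\extr S=\fr S$.

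For the converse, assume $\extr S=\fr S$ and suppose, for contradiction, that $S$ is not uniformly convex: there is $\epsilon_0\in(0,\diamet S]$ with $\delta_S(\epsilon_0)=0$. This means there exist sequences $x_1^k,x_2^k\in S$ with $\|x_1^k-x_2^k\|=\epsilon_0$ and points $y^k\notin S$ with $y^k\to\bar y:=\lim\frac12(x_1^k+x_2^k)$ (along a subsequence, using compactness of $S$ to extract convergent subsequences $x_i^k\to\bar x_i$). Then $\bar x_1,\bar x_2\in S$, $\|\bar x_1-\bar x_2\|=\epsilon_0>0$, and $\bar y=\frac12(\bar x_1+\bar x_2)$; since $y^k\notin S$ and $y^k\to\bar y$ while $S$ is closed, $\bar y$ cannot be interior to $S$, so $\bar y\in\fr S$. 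But $\bar y$ is the midpoint of the nondegenerate segment $[\bar x_1,\bar x_2]\subseteq S$, so $\bar y\notin\extr S$, contradicting $\extr S=\fr S$.

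The main obstacle is making the reduction from ``$\delta_S(\epsilon_0)=0$'' to ``the supremum defining $\delta_S$ is not attained at the limiting configuration'' fully rigorous: one must pass to the limit simultaneously in the two source points and in the radius, and argue that the $\ball{\cdot}{\delta}\subseteq S$ property is, in the relevant sense, semicontinuous so that a genuine boundary midpoint survives in the limit. Compactness of $S$ together with closedness handles this, but care is needed with the case $\epsilon_0=\diamet S$ (where $\delta_S$ is evaluated at the endpoint) and with the possibility that $\delta_S(\epsilon_0)>0$ for this particular $\epsilon_0$ yet $\delta_S$ vanishes on a sequence $\epsilon_k\uparrow\diamet S$; this last subtlety is again dissolved by extracting a convergent subsequence of the corresponding extremal pairs and invoking compactness to land on a limiting pair at distance $\limsup\epsilon_k$ with a non-extreme boundary midpoint.
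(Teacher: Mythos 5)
Your proof is correct and follows essentially the same route as the paper's: the forward implication is identical, and your converse is just the sequential-compactness version of the paper's variational argument (the paper minimizes the continuous function $(x_1,x_2)\mapsto\dist{{x_1+x_2\over 2}}{\R^n\backslash\inte S}$ over the compact set of pairs at distance $\epsilon$, where you extract convergent subsequences from a violating sequence), both concluding that a boundary midpoint of a nondegenerate segment in $S$ cannot be extreme. The worries in your final paragraph are unfounded: the negation of uniform convexity is precisely the existence of a single $\epsilon_0\in(0,\diamet S]$ with $\delta_S(\epsilon_0)=0$, and your argument handles $\epsilon_0=\diamet S$ verbatim (the diameter is attained by compactness), so no separate treatment of sequences $\epsilon_k\uparrow\diamet S$ is needed.
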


\begin{proof}
Observe that by compactness of $S$, it is $\fr S\ne\varnothing$.
Actually, the Krein-Milman theorem ensures that
$\extr S\ne\varnothing$ also.
Clearly, it is $\extr S\subseteq\fr S$. To begin with,
assume that $S$ is uniformly convex. Take any $\bar x\in\fr S$.
If it were $\bar x\not\in\extr S$, then there would exist
$x_1,\, x_2\in S\backslash\{\bar x\}$, with $x_1\ne x_2$,
such that $\bar x={x_1+x_2\over 2}$. Observe that, as $\bar x
\in\fr S$, the inclusion $\ball{\bar x}{\delta}\subseteq S$
can be true only for $\delta=0$. Thus $\delta_S (\|x_1-x_2\|)
=0$, contradicting the fact that $S$ is uniformy convex.

Conversely, assume that the equality $\extr S=\fr S$ holds true.
Fix an arbitrary $\epsilon\in (0,\diamet S]$ (under the current hypotheses
the value $\diamet S$ is attained on $S$). Notice that, since $S$ is
compact, the set
$$
     S^2_\epsilon=\{(x_1,x_2)\in S\times S:\ \|x_1-x_2\|=\epsilon\}
$$
is still compact. Define the function $\vartheta:\R^n\times\R^n
\longrightarrow [0,+\infty)$ by setting
$$
    \vartheta(x_1,x_2)=\dist{{x_1+x_2\over 2}}{\R^n\backslash\inte S}.
$$
Since such a function is continuous on $\R^n\times\R^n$, it attains
its global minimum over $S^2_\epsilon$ at some point
$(\hat x_1,\hat x_2)\in S^2_\epsilon$, with $\hat x_1\ne\hat x_2$ as
$\|\hat x_1-\hat x_2\|=\epsilon$.
If it were $\vartheta(\hat x_1,
\hat x_2)=0$, then it would happen that
$$
    {\hat x_1+\hat x_2\over 2}\in\fr S.
$$
The last inclusion contradicts the fact that ${\hat x_1+\hat x_2\over 2}$
is an extreme point of $S$. Therefore, one deduces that $\vartheta
(\hat x_1,\hat x_2)>0$. As it is true that
$$
    \delta_S(\epsilon)=\min_{(x_1,x_2)\in S^2_\epsilon}
    \vartheta(x_1,x_2)>0,
$$
the requirement in Definition \ref{def:uniconv} (ii) turns out to be
satisfied. The arbitrariness of $\epsilon\in (0,\diamet S]$ completes
the proof.
\end{proof}

Proposition \ref{pro:ucchar} can not be extended to infinite-dimensional
spaces, where balls with $\extr\B=\fr\B$ can exist, yet failing
to be uniformly convex (see \cite{Dies75}).

\subsection{Openness at a linear rate}

In the next definition, some notions and related results are
recalled, which describe quantitatively a certain surjective behaviour
of a mapping. Such a local property, in a synergical interplay
with other features ($\Coneone$-smoothness and uniform convexity)
of the involved objects, allows one to achieve the main result
in the paper.

\begin{definition}     \label{def:lopmap}
Let $f:X\longrightarrow Y$ be a mapping between two metric spaces
and $x_0\in X$. The mapping $f$ is said to be
{\it open at a linear rate around} $x_0$ if there exist
positive reals $\delta$, $\zeta$ and $\sigma$ such that
\begin{equation}   \label{in:pointlop}
   f(\ball{x}{r})\supseteq\ball{f(x)}{\sigma r}\cap
   \ball{f(x_0)}{\zeta},  \quad\forall x\in\ball{x_0}{\delta},
   \ \forall r\in [0,\delta].
\end{equation}
\end{definition}

The role of a surjection property in preserving convexity
of sets should not come as a surprise: the convexity
of the image requires indeed line segments joining
points in the image of a set to belong to the image,
that is a certain openness/covering behaviour of the reference
mapping.

It is well known (see, for instance, \cite{DonRoc14,Ioff16,Mord06})
that the property of openness at a linear rate for a mapping $f$
around $x_0$ can be equivalently reformulated as follows:
there exist positive reals $\delta$ and $\kappa$ such that
\begin{equation}   \label{in:pointmr}
   \dist{x}{f^{-1}(y)}\le\kappa d(y,f(x)),\quad\forall x\in
   \ball{x_0}{\delta},\ \forall y\in\ball{f(x_0)}{\delta}.
\end{equation}
Whenever the inequality $(\ref{in:pointmr})$ holds, $f$ is
said to be {\it metrically regular} around $x_0$. The infimum
over all values $\kappa$ for which there exists $\delta>0$
such that $(\ref{in:pointmr})$ holds true is called
{\it exact regularity bound} of $f$ around $x_0$ and it will
be denoted by $\reg{f}{x_0}$, with the convention that
$\reg{f}{x_0}=+\infty$ means that $f$ fails to be metrically
regular around $x_0$.

\begin{remark} \label{rem:pointlopsimpl}
(i) It is convenient to note that, whenever $f$ is continuous
at $x_0$, the inclusion defining the
openness of $f$ at a linear rate around $x_0$ takes the
simpler form: there exists positive $\delta$ and $\sigma$
such that
\begin{equation}   \label{in:pointlopsimpl}
   f(\ball{x}{r})\supseteq\ball{f(x)}{\sigma r},
   \quad\forall x\in\ball{x_0}{\delta},
   \ \forall r\in [0,\delta].
\end{equation}

(ii) From the inclusion $(\ref{in:pointlopsimpl})$ it is clear
that, whenever a mapping $f$ is open at a linear rate around
$x_0$ and continuous at the same point, it holds
\begin{equation}     \label{in:intelop}
   f(\inte S)\subseteq\inte f(S),
\end{equation}
provided that $S\subseteq\ball{x}{\delta}$, where $\delta$ is as
above. Indeed, if it is $x\in\inte S$, then for some $r\in (0,\delta)$
it must be $\ball{x}{r}\subseteq S$. Therefore, one gets
$$
  \ball{f(x)}{\sigma r}\subseteq f(\ball{x}{r})\subseteq f(S).
$$
In turn, from the inclusion $(\ref{in:intelop})$, one deduces
$$
  f^{-1}(y)\cap S\subseteq\fr S,\quad\forall y\in\fr f(S).
$$
\end{remark}

As the behaviour formalized by openness at a linear rate/metric
regularity plays a crucial role in a variety of topics in
variational analysis, it has been widely investigated in the past
decades and several criteria for detecting the occurrence of it
are now at disposal. In the case of smooth mappings between
Banach spaces, the main criterion for openness at a linear rate/metric
regularity, known under the name of Lyusternik-Graves theorem,
can be stated as follows (see \cite{DonRoc14,Ioff16,Mord06}).

\begin{theorem}[Lyusternik-Graves]
Let $f:\X\longrightarrow\Y$ be a mapping between Banach spaces.
Suppose that $f$ is strictly differentiable at $x_0\in\X$.
Then, $f$ is open at a linear rate around $x_0$ iff $\der{f}{x_0}$
is onto, i.e. $\der{f}{x_0}(\X)=\Y$.
\end{theorem}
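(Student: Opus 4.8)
The statement to prove is the Lyusternik--Graves theorem: for $f:\X\to\Y$ strictly differentiable at $x_0$, $f$ is open at a linear rate around $x_0$ if and only if $\der{f}{x_0}$ is surjective.

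\medskip

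The plan is to treat the two implications separately, the ``only if'' direction being elementary and the ``if'' direction being the substantial part.

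\medskip

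\textbf{Necessity.} Suppose $f$ is open at a linear rate around $x_0$, so that (using strict differentiability, hence continuity, and Remark \ref{rem:pointlopsimpl}(i)) there are $\delta,\sigma>0$ with $f(\ball{x}{r})\supseteq\ball{f(x)}{\sigma r}$ for all $x\in\ball{x_0}{\delta}$ and $r\in[0,\delta]$. Write $A=\der{f}{x_0}$ and $y_0=f(x_0)$. I would show $A$ is onto by a perturbation argument: fix $v\in\Y$ and, for small $t>0$, seek $u_t$ with $A u_t$ close to $v$. Using the openness inclusion at $x=x_0$ with target point $y_0+tv$ (for $t$ small enough that $tv$ lies in the relevant ball), one gets a point $x_t\in\ball{x_0}{t\|v\|/\sigma}$ with $f(x_t)=y_0+tv$. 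Then
$$
  v=\frac{f(x_t)-f(x_0)}{t}=\frac{A(x_t-x_0)}{t}+\frac{o(\|x_t-x_0\|)}{t},
$$
and since $\|x_t-x_0\|=O(t)$, setting $u_t=(x_t-x_0)/t$ (a bounded net) we get $Au_t\to v$. Because the open mapping theorem (or a direct Baire-category argument applied to $A$) implies the range of a bounded operator is closed as soon as it is ``almost onto'' in this quantitative sense — more cleanly, one shows directly that $A(\ball{\nullv}{1/\sigma})$ is dense in $\B_\Y$, hence by the open mapping theorem $A$ is onto. Alternatively, and more self-containedly, I would note that the openness inclusion passes to the limit: dividing by $t$ and letting $t\downarrow 0$ shows $A(\ball{\nullv}{r})\supseteq\ball{\nullv}{\sigma r}$ for the linearized map, so $A$ is onto outright.

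\medskip

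\textbf{Sufficiency.} This is the hard part. Assume $A=\der{f}{x_0}$ is onto. By the open mapping theorem there is $c>0$ with $A(\B_\X)\supseteq c\,\B_\Y$, i.e. every $v\in\Y$ has a preimage $u$ under $A$ with $\|u\|\le\|v\|/c$. I would then run the classical iteration / Lyusternik--Graves scheme. Fix $\sigma\in(0,c)$; strict differentiability gives $\delta_0>0$ such that
$$
  \|f(x)-f(x')-A(x-x')\|\le\varepsilon\|x-x'\|\qquad\text{whenever }x,x'\in\ball{x_0}{\delta_0},
$$
with $\varepsilon>0$ chosen small (say $\varepsilon(c-\sigma)^{-1}$ bounding the contraction ratio below $1$, e.g. $\varepsilon<c-\sigma$). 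Now fix $x\in\ball{x_0}{\delta}$ and $r\le\delta$ with $\delta$ suitably smaller than $\delta_0$, and fix a target $y\in\ball{f(x)}{\sigma r}$. Build a sequence $x^{(0)}=x$, $x^{(k+1)}=x^{(k)}+u^{(k)}$ where $u^{(k)}$ is chosen via surjectivity of $A$ to solve $A u^{(k)}=y-f(x^{(k)})$ with $\|u^{(k)}\|\le\|y-f(x^{(k)})\|/c$. The key estimates are: (a) $\|y-f(x^{(k+1)})\|=\|y-f(x^{(k)})-Au^{(k)}+ (f(x^{(k)})+Au^{(k)}-f(x^{(k+1)}))\|\le\varepsilon\|u^{(k)}\|\le(\varepsilon/c)\|y-f(x^{(k)})\|$, giving geometric decay with ratio $q=\varepsilon/c<1$; (b) $\|u^{(k)}\|\le c^{-1}q^{k}\|y-f(x)\|\le c^{-1}q^{k}\sigma r$, so $\sum_k\|u^{(k)}\|\le \sigma r/(c(1-q))\le r$ provided the constants are tuned so that $\sigma/(c(1-q))\le 1$ (shrinking $\sigma$ if necessary). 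Hence $(x^{(k)})$ is Cauchy, stays in $\ball{x}{r}\subseteq\ball{x_0}{2\delta}$ (so the differentiability estimate remains valid throughout the iteration — this is where the choice $\delta<\delta_0/2$ matters), and converges to some $\hat x\in\ball{x}{r}$ with $f(\hat x)=y$ by continuity. This establishes $f(\ball{x}{r})\supseteq\ball{f(x)}{\sigma r}$ for all such $x,r$, which is exactly \eqref{in:pointlopsimpl}, hence openness at a linear rate around $x_0$.

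\medskip

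The main obstacle is the bookkeeping in the iteration: one must simultaneously guarantee (i) the iterates never leave the neighborhood where the linearization estimate holds, (ii) the total displacement is at most $r$ so the limit lands in $\ball{x}{r}$, and (iii) the residuals $\|y-f(x^{(k)})\|$ contract geometrically. All three are achieved by first fixing the contraction ratio $q<1$ from the differentiability modulus $\varepsilon$ and the surjectivity constant $c$, and only then choosing $\sigma$ and $\delta$ small enough that the geometric-series total is absorbed into $r$ and the neighborhood $\ball{x_0}{2\delta}\subseteq\ball{x_0}{\delta_0}$; the order of quantifier choices is the delicate point. I would also remark that strict differentiability (as opposed to mere Gâteaux/Fréchet differentiability at the single point $x_0$) is exactly what makes the estimate hold uniformly for pairs $x,x'$ near $x_0$, which is indispensable for the argument.
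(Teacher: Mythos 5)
Your proposal is correct, but note that the paper itself offers no proof of this theorem: it is stated as a classical result with references to \cite{DonRoc14,Ioff16,Mord06}, so there is no in-paper argument to compare against. What you give is the standard proof from that literature. The sufficiency direction is the Graves iteration, and you handle the one genuinely delicate point — the order of quantifiers — correctly: first the surjectivity constant $c$ from the open mapping theorem, then $\sigma<c$, then $\varepsilon<c-\sigma$ so that the geometric series of displacements is absorbed into $r$ and the iterates stay inside $\ball{x_0}{\delta_0}$ where the strict-differentiability estimate is valid. The necessity direction is also fine in its primary form (density of $A(\ball{\nullv}{1/\sigma})$ in $\B_\Y$ plus the ``almost open implies open'' lemma for bounded operators between Banach spaces). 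One small caveat: your ``alternative, more self-contained'' version of necessity — letting $t\downarrow 0$ to conclude $A(\ball{\nullv}{r})\supseteq\ball{\nullv}{\sigma r}$ outright — does not work as stated, since the limit only yields the inclusion for the \emph{closure} of $A(\ball{\nullv}{r})$; you still need the closure-removal step, so the ``alternative'' is not actually independent of the first argument. This is a cosmetic issue, not a gap, since your main line of reasoning already supplies that step.
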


The above criterion is usually complemented with the following
(primal and dual) estimates of the exact regularity bound,
which are relevant for the present analysis:
$$
  \reg{f}{x_0}=\sup_{\|y\|\le 1}\inf\{\|x\|:\ x\in
  \der{f}{x_0}^{-1}(y)\}
$$
and
$$
  \reg{f}{x_0}=\left(\inf_{\|y^*\|_*=1}\|\der{f}{x_0}^*y^*\|_*
  \right)^{-1}=\left(\dist{\nullv^*}{\der{f}{x_0}^*(\Sfer^*)}
  \right)^{-1},
$$
where $\Lambda^*\in\mathcal{L}(\Y^*,\X^*)$ denotes the adjoint
operator to $\Lambda\in\mathcal{L}(\X,\Y)$ and the conventions
$$
   \inf\varnothing=+\infty \qquad\hbox{ and }\qquad
   1/0=+\infty
$$
are adopted. Remember that $\Lambda\in\mathcal{L}(\X,\Y)$ is
onto iff $\Lambda^*$ has bounded inverse. It is worth noting
that, when both $\X$ and $\Y$ are finite-dimensional Euclidean
spaces, the condition on $\der{f}{x_0}$ to be onto reduces to the
fact that Jacobian matrix of $f$ at $x_0$ is full-rank.
Furthermore, whenever $\der{f}{x_0}$ happens to be invertible,
one has $\reg{f}{x_0}=\|\der{f}{x_0}^{-1}\|_\mathcal{L}$.


\section{An extension of the Polyak convexity principle}  \label{Sec:3}

Given $c>0$, let us introduce the following subclasses of uniformly
convex subsets of $\X$, with modulus of convexity of power type $2$:
$$
  \Uniconvc(\X)=\{S\subseteq\X:\ \delta_S(\epsilon)\ge c\epsilon^2,
  \ \forall\epsilon\in (0,\diamet S)\}
$$
and
$$
  \Uniconv(\X)=\bigcup_{c>0}\Uniconvc(\X).
$$

\begin{remark}     \label{rem:midpointconv}
In the proof of the next theorem the following fact, which can
be easily proved by an iterative bisection procedure, will be
used:
any closed subset $V$ of a Banach space is convex iff ${y_1+y_2
\over 2}\in V$, whenever $y_1,\, y_2\in V$.
It is easy to see that if $V$ is not closed, this mid-point
property does not imply the convexity of $V$. Consider, for
instance, the set $V$ defined by
$$
  V=\bigcup_{k=0}^\infty\left\{{i\over 2^k}:\ i\in
  \{0,1,2,3,\dots, 2^k\}\right\}\subseteq [0,1].
$$
Since $V$ is countable, as a countable union of finite sets,
it is strictly included in $[0,1]$. Therefore $V$ can not be
convex, because it contains $0$ and $1$, even though it has
the mid-point property, as one checks without difficulty.
\end{remark}

Below, the main result of the paper is established.

\begin{theorem}   \label{thm:extPCP}
Let $f:\Omega\longrightarrow\Y$ be a mapping between Banach
spaces, with $\Omega$ open nonempty subset of $\X$.
Let $x_0\in\Omega$ and $c>0$ such that:

\begin{itemize}

\item[(i)] $f\in\Coneone(\inte\ball{x_0}{r_0})$, for some
$r_0>0$;

\item[(ii)] $\der{f}{x_0}$ is onto;

\item[(iii)] it holds
$$
  {\reg{f}{x_0}\cdot\lip{\dif{f}}{\inte\ball{x_0}{r_0}}\over 8}
  <c.
$$

\end{itemize}
Then, there exists $\rho\in (0,r_0)$ such that, for every $S\in
\Uniconvc(\X)$, with $S\subseteq\ball{x_0}{\rho}$ and $f(S)$
closed, it is $f(S)\in\Uniconv(\Y)$.
\end{theorem}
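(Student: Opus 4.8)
The plan is to follow Polyak's idea: instead of describing the boundary of $f(S)$ directly, establish a quantitative midpoint inclusion for $f(S)$ and then upgrade it to uniform convexity using that $f(S)$ is closed. Concretely, the aim is to produce a constant $\tilde c>0$, depending on $x_0$, $c$, $r_0$ and the data of $f$ but not on $S$, such that for every admissible $S$ and all $y_1,y_2\in f(S)$ with $y_1\ne y_2$,
\[
   \ball{\tfrac{y_1+y_2}{2}}{\tilde c\|y_1-y_2\|^2}\subseteq f(S).
\]
Once this is secured, $f(S)$ is convex by Remark \ref{rem:midpointconv} (it is closed by assumption and the inclusion makes it midpoint-convex), it is bounded by Remark \ref{rem:smoothfacts}(ii) and hence $f(S)\ne\Y$, and the inclusion yields $\delta_{f(S)}(\epsilon)\ge\tilde c\epsilon^2$ on $(0,\diamet f(S))$; therefore $f(S)\in\Uniconv(\Y)$.

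First I would fix the constants. Since $f\in\Coneone(\inte\ball{x_0}{r_0})$, it is strictly differentiable at $x_0$, so by (ii) and the Lyusternik--Graves theorem $f$ is metrically regular around $x_0$ with $\reg{f}{x_0}<+\infty$, which makes (iii) meaningful. Put $L_0=\lip{\dif{f}}{\inte\ball{x_0}{r_0}}$. Using (iii) I choose $\kappa>\reg{f}{x_0}$ with $\kappa L_0/8<c$, together with a $\delta_1>0$ realizing the metric-regularity inequality with constant $\kappa$ over $\ball{x_0}{\delta_1}\times\ball{f(x_0)}{\delta_1}$; then $c'\in(\kappa L_0/8,c)$; then a radius $\rho_1\in(0,r_0)$ and the finite number $\beta=\sup_{x\in\ball{x_0}{\rho_1}}\|\der{f}{x}\|_{\mathcal{L}}$ (finite by Remark \ref{rem:smoothfacts}(ii)); and finally $\tilde c\in\bigl(0,(c'-\kappa L_0/8)/(\kappa\beta^2)\bigr)$, so that $\kappa(\tilde c\beta^2+L_0/8)<c'$. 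Lastly I take $\rho\in(0,\rho_1]$ with $\rho\le\delta_1$ small enough that $\beta\rho+4\tilde c\beta^2\rho^2\le\delta_1$; as will be clear below, this forces every ``target'' point $y$ considered in the next step to satisfy $\|y-f(x_0)\|\le\delta_1$, so that metric regularity applies to it.

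Now fix an admissible $S$ and take $y_i=f(x_i)$ with $x_i\in S$, $y_1\ne y_2$; set $\bar x=\tfrac12(x_1+x_2)\in S$ and $\epsilon=\|y_1-y_2\|$. Since $f(x_1)\ne f(x_2)$ one has $x_1\ne x_2$, and the mean value theorem (cf. Remark \ref{rem:smoothfacts}(ii)) gives $\epsilon\le\beta\|x_1-x_2\|$. By Remark \ref{rem:strconvfacts}(iii) (applicable because $c'<c$), $\ball{\bar x}{c'\|x_1-x_2\|^2}\subseteq S$, while the second-order estimate (\ref{in:2ndordest}), applied on $\inte\ball{x_0}{r_0}\supseteq[x_1,x_2]$, gives $\bigl\|\tfrac12(y_1+y_2)-f(\bar x)\bigr\|\le\tfrac{L_0}{8}\|x_1-x_2\|^2$. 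Let $y\in\ball{\tfrac12(y_1+y_2)}{\tilde c\epsilon^2}$; then $\bar x\in\ball{x_0}{\delta_1}$, $y\in\ball{f(x_0)}{\delta_1}$, and
\[
   \|y-f(\bar x)\|\le\tilde c\epsilon^2+\tfrac{L_0}{8}\|x_1-x_2\|^2\le\bigl(\tilde c\beta^2+\tfrac{L_0}{8}\bigr)\|x_1-x_2\|^2 ,
\]
so metric regularity yields $\dist{\bar x}{f^{-1}(y)}\le\kappa\bigl(\tilde c\beta^2+\tfrac{L_0}{8}\bigr)\|x_1-x_2\|^2<c'\|x_1-x_2\|^2$ (the strict inequality coming from the choice of $\tilde c$ together with $\|x_1-x_2\|>0$). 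Hence there is $x'\in f^{-1}(y)$ with $\|x'-\bar x\|\le c'\|x_1-x_2\|^2$, i.e. $x'\in\ball{\bar x}{c'\|x_1-x_2\|^2}\subseteq S$, so $y=f(x')\in f(S)$. This proves the displayed inclusion and completes the scheme.

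The step I expect to be the real obstacle is the matching of scales: the ``room'' available inside $S$ around $\bar x$ and the second-order defect of $f$ at $\bar x$ are both of order $\|x_1-x_2\|^2$, whereas the radius to be filled in the image is naturally of order $\|y_1-y_2\|^2$; the Lipschitz bound $\epsilon\le\beta\|x_1-x_2\|$ bridges these, and hypothesis (iii) is precisely what leaves, after the regularity constant $\kappa$ has inflated both the defect and the target radius, a strictly positive slack $c'-\kappa L_0/8$ --- which pins down the admissible range of $\tilde c$. A secondary technical wrinkle is that $\reg{f}{x_0}$ is only an infimum, handled by taking $\kappa$ slightly above it and keeping all comparisons strict, and that $\rho$ must be shrunk so that every target point stays inside the neighbourhood on which metric regularity is available.
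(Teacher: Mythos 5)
Your argument is correct and reaches the paper's conclusion with the same toolbox but a genuinely different decomposition. The paper proceeds in two stages: it first pulls back only the exact midpoint $\bar y=\tfrac12(y_1+y_2)$ via metric regularity, landing (thanks to the second-order estimate $(\ref{in:2ndordest})$ and the ball inclusion of Remark \ref{rem:strconvfacts}(iii)) at some $\hat x\in\ball{\bar x}{\tfrac{\kappa\ell}{8}\|x_1-x_2\|^2}\subseteq S$, which settles midpoint convexity; then, in a second step, it spends the slack in hypothesis (iii) on the \emph{domain} side, fitting a ball $\ball{\hat x}{\eta\epsilon^2/\beta^2}$ inside $S$ and pushing it forward through the openness-at-a-linear-rate inclusion $(\ref{in:lopx0rho})$ to get $\ball{\bar y}{\sigma\eta\epsilon^2/\beta^2}\subseteq f(S)$. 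You spend the same slack on the \emph{image} side from the outset: you apply metric regularity to every target $y\in\ball{\bar y}{\tilde c\epsilon^2}$, absorb the extra $\tilde c\epsilon^2\le\tilde c\beta^2\|x_1-x_2\|^2$ into the defect $\|y-f(\bar x)\|$, and pull each such $y$ back into $\ball{\bar x}{c'\|x_1-x_2\|^2}\subseteq S$ in one shot. Since linear openness and metric regularity are equivalent, the difference is organizational rather than substantive, but it is real: your version never invokes the forward inclusion with the constant $\sigma$, gives the explicit modulus $\delta_{f(S)}(\epsilon)\ge\tilde c\epsilon^2$ in a single pass, and uses the closedness of $f(S)$ exactly where the paper does (to upgrade the midpoint property to convexity via Remark \ref{rem:midpointconv}); the paper's version, by separating the qualitative convexity claim from the quantitative one, makes the role of linear openness more visible. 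Your bookkeeping of the radii (in particular the condition $\beta\rho+4\tilde c\beta^2\rho^2\le\delta_1$ ensuring that every target $y$ stays within the neighbourhood on which metric regularity is valid) is sound; the only cosmetic point is that $\beta$ should be kept strictly positive (the paper adds $+1$ to the supremum), which is automatic here since $\der{f}{x_0}$ is onto a nontrivial space.
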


\begin{proof}
The proof is divided into two parts.

\noindent {\it First part}: Let us show that $f(S)$ is convex.
According to the hypothesis (iii), it is possible to fix positive
reals $\kappa$ and $\ell$ in such a way that $\kappa>\reg{f}{x_0}$,
$\ell>\lip{\dif{f}}{\inte\ball{x_0}{r_0}}$, and the following
inequality is fulfilled
\begin{equation}    \label{in:klc}
      {\kappa\ell\over 8}<c.
\end{equation}
By virtue of hypotheses (i) and (ii), as $f$ is in particular strictly
differentiable at $x_0$, it is possible to invoke the
Lyusternik-Graves theorem, ensuring that $f$ is metrically regular
around $x_0$.
This means that there exist positive reals $\tilde\kappa$ and
$\tilde r$ such that
$$
   \reg{f}{x_0}<\tilde\kappa<\kappa, \qquad\qquad
   \tilde r\in (0,r_0),
$$
and
\begin{equation}    \label{in:mrfx0}
   \dist{x}{f^{-1}(y)}\le\tilde\kappa\|y-f(x)\|,
   \quad\forall x\in\ball{x_0}{\tilde r},\
   \forall y\in\ball{f(x_0)}{\tilde r}.
\end{equation}
Besides, by the continuity of $f$ at $x_0$, corresponding to
$\tilde r$ there exists $r_*\in (0,r_0)$ such that
$$
   f(x)\in\ball{f(x_0)}{\tilde r},\quad\forall x\in
   \ball{x_0}{r_*}.
$$
Then, take $\rho\in (0,\min\{\tilde r,\, r_*\})$.
Notice that, in the light of Remark \ref{rem:pointlopsimpl},
up to a further reduction in the value of $\rho$, one can
assume that for some $\sigma>0$ it holds
\begin{equation}     \label{in:lopx0rho}
  f(\ball{x}{r})\supseteq\ball{f(x)}{\sigma r},\quad\forall
  x\in\ball{x_0}{\rho},\ \forall r\in [0,\rho].
\end{equation}
Now, take an arbitrary element $S\in\Uniconvc(\X)$, with
$S\subseteq\ball{x_0}{\rho}$ and such that $f(S)$ is closed.
According to Remark \ref{rem:midpointconv},
the convexity of $f(S)$ can be proved by showing that
for every $y_1,\, y_2\in f(S)$, with $y_1\ne y_2$, it holds
${y_1+y_2\over 2}\in f(S)$. To this aim, let $x_1,\, x_2
\in S$ be such that $y_1=f(x_1)$ and $y_2=f(x_2)$. For
convenience, set

$$
  \bar x={x_1+x_2\over 2}\qquad\hbox{ and }\qquad
  \bar y={y_1+y_2\over 2}
$$
Notice that, as $y_1\ne y_2$, it must be also $x_1\ne x_2$.
Moreover, as $S\subseteq\ball{x_0}{\rho}\subseteq
\ball{x_0}{r_*}$, one has $y_1,\, y_2\in\ball{f(x_0)}{\tilde r}$
and therefore, by the convexity of a ball, one has also
$\bar y\in\ball{f(x_0)}{\tilde r}$.
Thus, since $\bar x\in\ball{x_0}{\tilde r}$ and $y\in
\ball{f(x_0)}{\tilde r}$, then inequality $(\ref{in:mrfx0})$
implies
\begin{equation}   \label{in:mrfbarx}
  \dist{\bar x}{f^{-1}(\bar y)}\le\tilde\kappa
  \|\bar y-f(\bar x)\|.
\end{equation}
If $\bar y=f(\bar x)$ the proof of the convexity of $f(S)$
is complete, because $\bar x\in S$.
Otherwise, it happens that $\|\bar y-f(\bar x)\|>0$, so the
inequality $(\ref{in:mrfbarx})$ entails the existence of $\hat x
\in f^{-1}(\bar y)$ such that
$$
   \|\hat x-\bar x\|<\kappa\|\bar y-f(\bar x)\|.
$$
By taking account of the estimate $(\ref{in:2ndordest})$
in Remark \ref{rem:smoothfacts} (i), as
it is $[x_1,x_2]\in\ball{x_0}{\rho}\subseteq\inte\ball{x_0}{r_0}$,
one consequently obtains
$$
    \|\hat x-\bar x\|<\kappa{\ell\over 8}\|x_1-x_2\|^2,
$$
that is $\hat x\in\ball{\bar x}{{\kappa\ell\over 8}\|x_1-x_2\|^2}$.
Since $S\in\Uniconvc(\X)$ and the inequality $(\ref{in:klc})$ is in force,
in the light of what observed in Remark \ref{rem:strconvfacts} (iii)
it follows
$$
  \ball{\bar x}{{\kappa\ell\over 8}\|x_1-x_2\|^2}\subseteq S,
$$
with the consequence that $\hat x\in S$ and hence $\bar y=f(\hat x)$
turns out to belong to $f(S)$.

\noindent {\it Second part}: Let us prove now the assertion in the thesis.
According to what noted in Remark \ref{rem:smoothfacts} (ii), under
the above hypotheses $f(S)$ is bounded. Fix $\epsilon\in (0,\diamet f(S))$
and take arbitrary $y_1,\, y_2\in f(S)$, with $\|y_1-y_2\|=\epsilon$.
Let $\bar y,\, x_1,\, x_2,\, \bar x$ and
$\hat x$ be as in the first part of the proof (it may happen that
$\hat x=\bar x$). In order to prove that $f(S)\in\Uniconv(\Y)$,
it is to be shown that, independently of $y_1,\, y_2\in f(S)$ and
$\epsilon$,
there exists $\gamma>0$ such that $\ball{\bar y}{\gamma\epsilon^2}
\subseteq f(S)$.
Again recalling Remark \ref{rem:smoothfacts} (ii), it is possible to
define the positive real value
$$
  \beta=\sup_{x\in S}\|\der{f}{x}\|_\mathcal{L}+1<+\infty.
$$
By virtue of inequality $(\ref{in:klc})$, it is possible to pick
$\eta\in (0,c-{\kappa\ell\over 8})$ in such a way that
$$
  \hat x\in\ball{\bar x}{{\kappa\ell\over 8}\|x_1-x_2\|^2}
  \subseteq \ball{\bar x}{\left({\kappa\ell\over 8}+\eta\right)
  \|x_1-x_2\|^2}\subseteq S.
$$
From the last chain of inclusions, it readily follows that
$$
  \ball{\hat x}{\eta\|x_1-x_2\|^2}\subseteq S.
$$
Since, by the mean-value theorem, it is
$$
  \|y_1-y_2\|\le\beta\|x_1-x_2\|,
$$
one obtains
$$
  \epsilon^2=\|y_1-y_2\|^2\le\beta^2\|x_1-x_2\|^2,
$$
and hence $\ball{\hat x}{\eta\epsilon^2/\beta^2}
\subseteq S$. Now, recall that $f$ is open at a linear rate around
$x_0$. Accordingly, as $S\subseteq\ball{x_0}{\rho}$, up to a further
reduction in the value of $\eta>0$ in such a way that
$\eta\diamet^2 f(S)/\beta^2<\rho$, one finds
$$
  \ball{\bar y}{\sigma\eta{\epsilon^2\over\beta^2}}\subseteq
  f\left(\ball{\hat x}{\eta{\epsilon^2\over\beta^2}}\right)
  \subseteq f(S)
$$
(remember the inclusion $(\ref{in:lopx0rho})$).
Thus, since by construction $\sigma$, $\eta$ and $\beta$ are
independent of $y_1,\, y_2$ and $\epsilon$, one can conclude
that
$$
  \delta_{f(S)}(\epsilon)\ge{\sigma\eta\over\beta^2}\epsilon^2.
$$
By arbitrariness of $\epsilon\in (0,\diamet f(S))$, this
completes the proof.
\end{proof}

A first comment to Theorem \ref{thm:extPCP} concerns its hypothesis
(iii), which seems to find no counterpart in the convexity principle
due to B.T. Polyak (see \cite[Theorem 2.1]{Poly01}). Such hypothesis
postulates a uniform convexity property of $S$, which must be quantitatively
adequate to the metric regularity of $f$ and to the Lipschitz
continuity of $\dif{f}$ around $x_0$.
Matching this condition is guaranteed for strongly convex sets
(in particular, for balls) with a sufficiently small radius,
provided that the underlying Banach space fulfils a certain
uniform convexity assumption.
This fact is clarified by the following

\begin{corollary}      \label{cor:strconvpcp}
Let $f:\Omega\longrightarrow\Y$ be a mapping between Banach
spaces, with $\Omega$ open nonempty subset of $\X$.
Let $x_0\in\Omega$ be such that:

\begin{itemize}

\item[(i)] $(\X,\|\cdot\|)$ admits a modulus of convexity
    of power type 2;

\item[(ii)] $f\in\Coneone(\inte\ball{x_0}{r_0})$, for some
$r_0>0$;

\item[(iii)] $\der{f}{x_0}$ is onto.

\end{itemize}

\noindent Then, there exists $\rho\in (0,r_0)$ such that, for every
$r$-convex set $S$, with $r\in [0,\rho)$ and $f(S)$ closed, it holds
$f(S)\in\Uniconv(\Y)$.
\end{corollary}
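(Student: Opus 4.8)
The plan is to obtain this corollary as a specialization of Theorem \ref{thm:extPCP}: hypotheses (ii) and (iii) of the corollary are precisely hypotheses (i) and (ii) of that theorem, so the only points to arrange are an admissible constant $c>0$ for hypothesis (iii) of the theorem and a radius $\rho$ small enough that every $r$-convex set of radius $r<\rho$ sitting near $x_0$ belongs to $\Uniconvc(\X)$. First I would fix $c$. Since $\der{f}{x_0}$ is onto and $f$, being of class $\Coneone$ near $x_0$, is strictly differentiable there, the Lyusternik--Graves theorem gives $\reg{f}{x_0}<+\infty$; moreover $\lip{\dif{f}}{\inte\ball{x_0}{r_0}}<+\infty$ because $f\in\Coneone(\inte\ball{x_0}{r_0})$. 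Hence $\reg{f}{x_0}\cdot\lip{\dif{f}}{\inte\ball{x_0}{r_0}}/8<+\infty$, and I may take any $c$ larger than this quantity. With such a $c$ all three hypotheses of Theorem \ref{thm:extPCP} are in force, so the theorem furnishes $\rho_*\in(0,r_0)$ such that $f(S)\in\Uniconv(\Y)$ whenever $S\in\Uniconvc(\X)$, $S\subseteq\ball{x_0}{\rho_*}$ and $f(S)$ is closed.

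Next I would link $r$-convexity to membership in $\Uniconvc(\X)$ and choose $\rho$. By hypothesis (i) there is $c_\X>0$ with $\delta_\X(\epsilon)\ge c_\X\epsilon^2$ for all $\epsilon\in(0,2]$ (in particular $\X$ is uniformly convex). Given an $r$-convex set $S$ with $r>0$, the estimate recalled in Example \ref{ex:ucset}(ii) gives $\delta_S(\epsilon)\ge r\,\delta_\X(\epsilon/r)$ for $\epsilon\in(0,\diamet S)$; since an $r$-convex set is contained in some ball of radius $r$, one has $\diamet S\le 2r$, so $\epsilon/r\in(0,2]$ throughout and therefore $\delta_S(\epsilon)\ge c_\X\epsilon^2/r$. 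Thus, putting $\rho=\min\{\rho_*,\,c_\X/c\}\in(0,r_0)$, any $r$-convex $S$ with $r\in(0,\rho)$ satisfies $c_\X/r>c$, whence $\delta_S(\epsilon)\ge c\epsilon^2$ for $\epsilon\in(0,\diamet S)$, i.e. $S\in\Uniconvc(\X)$; if furthermore $S\subseteq\ball{x_0}{\rho}$, which is contained in $\ball{x_0}{\rho_*}$, and $f(S)$ is closed, Theorem \ref{thm:extPCP} yields $f(S)\in\Uniconv(\Y)$, as claimed. The boundary case $r=0$ requires no argument, since then $S$, and hence $f(S)$, is a singleton, which belongs to $\Uniconv(\Y)$ trivially.

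I do not anticipate a genuine obstacle here; the argument is essentially bookkeeping around Theorem \ref{thm:extPCP}. The one point demanding a little care is the choice of $\rho$, which must play two roles simultaneously: it has to be at most $\rho_*$, so that $S$ lies where the convexity principle operates, and at most $c_\X/c$, so that the smallness of the radius upgrades the (qualitative) uniform convexity of an $r$-convex set to the (quantitative) membership $S\in\Uniconvc(\X)$. In establishing the latter one must exploit $\diamet S\le 2r$, which keeps the ratio $\epsilon/r$ within $(0,2]$, the range on which the power-type lower bound for $\delta_\X$ is available. Finally, one should keep in mind that, exactly as in Theorem \ref{thm:extPCP}, the set $S$ is tacitly required to be contained in $\ball{x_0}{\rho}$.
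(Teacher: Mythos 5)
Your argument is correct and follows essentially the same route as the paper: both reduce the corollary to Theorem \ref{thm:extPCP} via the estimate $\delta_S(\epsilon)\ge r\,\delta_\X(\epsilon/r)\ge \gamma\epsilon^2/r$ from Example \ref{ex:ucset}(ii), and then shrink $r$ so that hypothesis (iii) of the theorem holds. Your version is marginally more careful in fixing the constant $c$ before invoking the theorem (rather than letting $c=\gamma/r$ vary with $S$, as the paper does) and in flagging the tacit localization $S\subseteq\ball{x_0}{\rho}$, but these are refinements of the same proof, not a different one.
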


\begin{proof}
By virtue of the hypothesis (i), according to Example \ref{ex:ucset}
(ii), any $r$-convex set $S$ belongs to $\Uniconv(\X)$, for every
$r>0$. More precisely, on account of the inequality $(\ref{in:ucmscs})$,
one has
$$
   \delta_{S}(\epsilon)\ge r\delta_{\X}
   \left({\epsilon\over r}\right)\ge{\gamma\over r}\epsilon^2,
   \quad\forall\epsilon\in (0,2r],
$$
for some $\gamma>0$. Therefore, in order for the hypothesis (iii) of
Theorem \ref{thm:extPCP} to be satisfied, it suffices to take
$$
   r<{8\gamma\over \reg{f}{x_0}\cdot\lip{\dif{f}}{\inte\ball{x_0}{r_0}}+1}.
$$
Then, the thesis follows from Theorem \ref{thm:extPCP}.
\end{proof}

On the other hand, notice that Theorem \ref{thm:extPCP} does not
make any direct assumption on the Banach space $(\X,\|\cdot\|)$
(nonetheless, take into account what remarked at the end of
Example \ref{ex:ucset} (i)).
Furthermore, since any ball $\ball{x_0}{r}$ is a $r$-convex sets,
it should be clear that Corollary \ref{cor:strconvpcp} allows one
to embed in the current theory the Polyak convexity principle and
its refinement \cite[Theorem 3.2]{Uder13}.

Another comment to Theorem \ref{thm:extPCP} deals with the topological
assumption on the image $f(S)$. Of course, whenever $\X$ is a
finite-dimensional Euclidean space, $f(S)$ is automatically closed,
because $S$ is compact and $f$ is continuous on $S$.
In an infinite-dimensional setting, the same issue becomes subtler.
The closedness assumption thus appears also in the formulation of
other results for the convexity of images of mappings between
infinite-dimensional spaces (see \cite[Theorem 2.2]{BacSam09}).
It is clear that, whenever $\der{f}{x_0}$ not only is onto but,
in particular, is invertible, $f$ turns out to be a diffeomorphism
around $x_0$. As a consequence, for a proper $r_0>0$, any closed
set $S\subseteq\ball{x_0}{r_0}$ has a closed image. Nevertheless,
in the general setting of Theorem \ref{thm:extPCP}, to the best of the
author's knowledge, the question of formulating sufficient conditions
on $f$ in order for $f(S)$ to be closed is still open. The next
proposition, which is far removed from providing a solution to such
a question, translates the topological assumption on the image $f(S)$
into variational terms.

\begin{proposition}     \label{pro:closim}
Let $f:\Omega\longrightarrow\Y$ be a mapping between Banach
spaces, with $\Omega$ open nonempty subset of $\X$, and let
$x_0\in\Omega$. Suppose that:

\begin{itemize}

\item[(i)] $f$ is continuous in $\ball{x_0}{r_0}$,
for some $r_0>0$;

\item[(ii)] the function $x\mapsto \dist{x}{f^{-1}(y)}$ is
weakly lower semicontinuous, for every
$y\in\ball{f(x_0)}{r_0}$;

\item[(iii)] $(\X,\|\cdot\|)$ is reflexive;

\item[(iv)] $f$ is metrically regular around $x_0$.

\end{itemize}
Then, there exists $\rho\in (0,r_0)$ such that, for every
closed convex set $S\subseteq\ball{x_0}{\rho}$, $f(S)$
is closed.
\end{proposition}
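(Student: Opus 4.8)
The plan is to argue by weak sequential compactness, using hypotheses (iii) and (iv) to localize and hypothesis (ii) to pass to a limit that the mere norm-continuity of $f$ would not permit.

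First I would fix constants realizing the metric regularity of $f$ around $x_0$: by hypothesis (iv) there are $\kappa>0$ and $\delta\in(0,r_0)$ with
$$
  \dist{x}{f^{-1}(y)}\le\kappa\|y-f(x)\|,\qquad
  \forall x\in\ball{x_0}{\delta},\ \forall y\in\ball{f(x_0)}{\delta}.
$$
Using the continuity of $f$ at $x_0$ from hypothesis (i), I would then pick $\rho\in(0,\delta)$ small enough that $f(\ball{x_0}{\rho})\subseteq\ball{f(x_0)}{\delta}$; note that $\ball{f(x_0)}{\delta}$ is a closed ball, hence contains the norm-limits of sequences lying in it, and that $\ball{x_0}{\rho}\subseteq\inte\ball{x_0}{r_0}$ since $\rho<r_0$. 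This $\rho$ is the asserted radius.

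Now take a closed convex set $S\subseteq\ball{x_0}{\rho}$ and a sequence $y_n=f(x_n)\in f(S)$ with $y_n\to y$; the goal is to show $y\in f(S)$. Since $S$ is bounded, closed and convex and $\X$ is reflexive, $S$ is weakly compact; being also weakly closed (Mazur's theorem), after passing to a subsequence one obtains $x_n\rightharpoonup\bar x$ with $\bar x\in S$. Moreover $y\in\ball{f(x_0)}{\delta}$ by the choice of $\rho$, so the metric regularity inequality applies at each $x_n\in\ball{x_0}{\rho}\subseteq\ball{x_0}{\delta}$, yielding
$$
  \dist{x_n}{f^{-1}(y)}\le\kappa\|y-f(x_n)\|=\kappa\|y-y_n\|\longrightarrow 0.
$$

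At this point hypothesis (ii) does the decisive work: since $x\mapsto\dist{x}{f^{-1}(y)}$ is weakly lower semicontinuous, $\dist{\bar x}{f^{-1}(y)}\le\liminf_n\dist{x_n}{f^{-1}(y)}=0$. Finally, as $\bar x\in\ball{x_0}{\rho}$ lies in $\inte\ball{x_0}{r_0}$, where $f$ is continuous, the fiber $f^{-1}(y)$ is relatively closed near $\bar x$, so $\dist{\bar x}{f^{-1}(y)}=0$ forces $\bar x\in f^{-1}(y)$, i.e. $f(\bar x)=y$; since $\bar x\in S$, this gives $y\in f(S)$, proving $f(S)$ closed. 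I expect the main obstacle to be conceptual rather than computational: norm-continuity of $f$ is too weak to let one pass to a weak cluster point of $(x_n)$, and the argument works only because reflexivity produces such a cluster point inside $S$, metric regularity converts $y_n\to y$ into the vanishing of the distances to the single fiber $f^{-1}(y)$, and the weak lower semicontinuity assumed in (ii) survives that passage; one must also keep careful track of the radii so that $y$ never leaves the ball $\ball{f(x_0)}{\delta}$ on which metric regularity is guaranteed.
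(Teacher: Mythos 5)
Your proof is correct and follows essentially the same route as the paper: metric regularity turns $y_n\to y$ into $\dist{x_n}{f^{-1}(y)}\to 0$, reflexivity makes the bounded closed convex set $S$ weakly compact, hypothesis (ii) produces a point of $S$ at distance zero from the fiber, and continuity of $f$ closes the argument. The only (immaterial) difference is that you extract a weakly convergent subsequence $x_n\rightharpoonup\bar x$ and apply weak sequential lower semicontinuity along it, whereas the paper invokes the Weierstrass-type fact that a weakly l.s.c.\ function attains its infimum (which equals $0$) on the weakly compact set $S$.
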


\begin{proof}
Since by the hypothesis (iv) $f$ is metrically regular
around $x_0$, there exist positive real $r\in (0,r_0)$ and
$\kappa$ such that
\begin{equation}     \label{in:mrfr}
   \dist{x}{f^{-1}(y)}\le\kappa\|f(x)-y\|,\quad\forall
   x\in\ball{x_0}{r},\ \forall y\in \ball{f(x_0)}{r}.
\end{equation}
By the continuity of $f$ at $x_0$, there exists
$\rho\in (0,r)$ such that
$$
   f(x)\in\ball{f(x_0)}{r},\quad\forall
    x\in\ball{x_0}{\rho}.
$$
Thus, whenever $S\subseteq\ball{x_0}{\rho}$, one has
$f(S)\subseteq\ball{f(x_0)}{r}$.

Now, suppose that $S\subseteq\ball{x_0}{\rho}$ is a closed
convex set and take an arbitrary $y\in\cl f(S)\subseteq
\ball{f(x_0)}{r}$. Let $(y_n)_n$ be a sequence in $f(S)$, such that
$y_n\longrightarrow y$ as $n\to\infty$. As $y_n\in f(S)$, there exists a
sequence $(x_n)_{n}$ in $S$ such that $y_n=f(x_n)$, for
each $n\in\N$. Notice that, since $x_n\in S\subseteq\ball{x_0}{\rho}
\subseteq\ball{x_0}{r}$ and $y\in\cl f(S)\subseteq
\ball{f(x_0)}{r}$, the inequality $(\ref{in:mrfr})$ applies,
namely
\begin{equation}     \label{in:mrfrxnyn}
   \dist{x_n}{f^{-1}(y)}\le\kappa\|f(x_n)-y\|=
   \kappa\|y_n-y\|,\quad\forall n\in\N.
\end{equation}
This shows that $\dist{x_n}{f^{-1}(y)}\longrightarrow 0$ as
$n\to\infty$ and therefore
$$
  \inf_{x\in S}\dist{x}{f^{-1}(y)}=0.
$$
As a closed convex set, $S$ is also weakly closed. Moreover,
as a bounded subset of a reflexive Banach space, $S$ is
weakly compact.
Thus, since $y\in\ball{f(x_0)}{r_0}$, by virtue of the hypothesis
(ii), there must exist $\tilde x\in S$ such that
$$
  \dist{\tilde x}{f^{-1}(y)}=0.
$$
Since $f$ is continuous, the last inequality entails that
$\tilde x\in f^{-1}(y)$. This leads to conclude that $y\in f(S)$, thereby
completing the proof.
\end{proof}

The hypothesis (ii) in Proposition \ref{pro:closim} happens to be
always satisfied if $f$ is a linear mapping. In the nonlinear case,
the situation is expected to be much more complicate.

Let $C\subseteq\Y$ be a closed convex cone with apex at $\nullv$
and let $S\subseteq\X$ be nonempty and convex. Recall that a mapping
$f:S\longrightarrow\Y$ is said to be {\it convex-like} on $S$ with respect
to $C$ if for every $x_1,\, x_2\in S$ and $t\in [0,1]$, there
exists $x_t\in S$ such that
$$
  (1-t)f(x_1)+tf(x_2)\in f(x_t)+C.
$$
Convex-likeness is a generalization of the notion of $C$-convexity
of mappings taking values in partially ordered vector spaces.
It should be evident that, when $\Y=\R$, $C=[0,+\infty)$ and $x_t=(1-t)
x_1+tx_2$, the above inclusion reduces to the well-known inequality
defining the convexity of a functional. The class of convex-like
mappings has found a large employment in optimization and related
topics. For instance, if $\R^m$ and $C=\R^m_+$ it is readily seen that
this class includes all mappings $f=(f_1,\dots,f_m)$,
having each component $f_i:S\longrightarrow\R$, $i=1,\dots,m$ convex on a
convex set. For a detailed discussion about the notion of
convex-likeness of mappings, its variants and their impact
on the study of variational problems, the reader can refer to
\cite{MasRap00}.
The next corollary, which can be achieved as a direct consequence
of Theorem \ref{thm:extPCP}, reveals that any $\Coneone$ smooth
mapping behave as a convex-like mapping on uniformly convex
sets of class $\Uniconvc(\X)$ near a regular point.

\begin{corollary}
Let $f:\Omega\longrightarrow\Y$ be a mapping between Banach
spaces, $x_0\in\Omega$ and $c>0$. If $f$, $x_0$ and $c$
satisfy all hypotheses of Theorem \ref{thm:extPCP}, then
there exists $\rho>0$ such that, for every $S\in\Uniconvc(\X)$,
with $S\subseteq\ball{x_0}{\rho}$ and $f(S)$ closed, and every
cone $C\subseteq\Y$, the mapping $f:S\longrightarrow\Y$ is
convex-like on $S$ with respect to $C$.
\end{corollary}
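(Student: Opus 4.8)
The plan is to derive the statement as an essentially immediate consequence of Theorem~\ref{thm:extPCP}, exploiting the fact that convex-likeness of a mapping with respect to \emph{any} cone follows from mere convexity of its image. First I would fix $\rho\in(0,r_0)$ as provided by Theorem~\ref{thm:extPCP} (applied with the data $f$, $x_0$, $c$, which by assumption satisfy all its hypotheses), and take an arbitrary $S\in\Uniconvc(\X)$ with $S\subseteq\ball{x_0}{\rho}$ and $f(S)$ closed. By that theorem one has $f(S)\in\Uniconv(\Y)$; in particular $f(S)$ is a convex subset of $\Y$. In fact only this convexity is needed here, and it is exactly what the first part of the proof of Theorem~\ref{thm:extPCP} establishes.

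Next, given arbitrary $x_1,\,x_2\in S$ and $t\in[0,1]$, the points $f(x_1)$ and $f(x_2)$ belong to $f(S)$, so by convexity of $f(S)$ the point $(1-t)f(x_1)+tf(x_2)$ belongs to $f(S)$ as well. Hence there exists $x_t\in S$ such that $f(x_t)=(1-t)f(x_1)+tf(x_2)$. Since $C\subseteq\Y$ is a cone with apex at $\nullv$, one has $\nullv\in C$, whence
$$
  (1-t)f(x_1)+tf(x_2)=f(x_t)=f(x_t)+\nullv\in f(x_t)+C.
$$
As $x_1,\,x_2\in S$, $t\in[0,1]$ and the cone $C$ were arbitrary, this is precisely the defining property of convex-likeness of $f$ on $S$ with respect to $C$, so the proof is complete.

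There is no genuine obstacle: the whole content of the corollary is carried by the convexity assertion of Theorem~\ref{thm:extPCP}, and the passage to convex-likeness uses only that $\nullv\in C$ together with the fact that $f$ sends $S$ onto the convex set $f(S)$. The only points deserving a word of care are that convex-likeness as defined requires $x_t\in S$ (and not merely in some larger set), which is automatic since $f(S)$ is by construction the image of $S$ itself, and that the conclusion must hold for every cone $C$; the latter is clear because the witness $x_t$ is chosen independently of $C$.
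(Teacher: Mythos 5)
Your proposal is correct and follows essentially the same route as the paper: both invoke the convexity of $f(S)$ guaranteed by Theorem~\ref{thm:extPCP} to place $(1-t)f(x_1)+tf(x_2)$ in $f(S)$, and then use $\nullv\in C$ (equivalently, $f(S)\subseteq f(S)+C$) to exhibit the required $x_t\in S$. The paper's proof is just a one-line compression of exactly this argument.
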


\begin{proof}
The thesis follows at once by Theorem \ref{thm:extPCP}, from being
$$
   (1-t)f(x_1)+tf(x_2)\in f(S)\subseteq f(S)+C, \quad
   \forall x_1,\, x_2\in S,\ \forall t\in [0,1].
$$
\end{proof}


\section{Applications to optimization}   \label{Sec:4}

Throughout this section, applications of Theorem \ref{thm:extPCP}
will be considered to the study of constrained optimization problems,
having the following format
$$
   \min_{x\in S}\varphi(x)\quad\hbox{ subject to }\quad g(x)\in C,
   \leqno ({\mathcal P})
$$
where $\varphi:\X\longrightarrow\R$ and $g:\X\longrightarrow\Y$ are
given functions between Banach spaces, $S\subseteq\X$ and $C\subseteq\Y$
are given (nonempty) closed and convex sets.
Such a format is frequently employed in the literature for subsuming
under a general treatment a broad spectrum of finite and infinite-dimensional
extremum problems, with various kinds of constraints. The feasible region of
problem $({\mathcal P})$ will be henceforth denoted by $R$, i.e.
$R=S\cap g^{-1}(C)$.

According to a long-standing approach in optimization, now recognized as ISA (acronym
standing for Image Space Analysis), the analysis of several
issues related to problem $({\mathcal P})$ can be performed by associating
with $({\mathcal P})$ and with an element $x_0\in R$ the mapping
$\immap{x_0}:\X\longrightarrow\R\times\Y$, which is defined by
$$
  \immap{x_0}(x)=(\varphi(x)-\varphi(x_0),g(x))
$$
(see, for instance, \cite{Gian05} and references therein).
It is natural to believe that the mapping $\immap{x_0}$ inherits
certain structural features of the given problem.
Such issues as the solution existence, optimality conditions, duality,
and so on, can be investigated by studying relationships between
the two subsets of the space $\R\times\Y$, namely $\immap{x_0}(S)$ and
$Q=(-\infty,0)\times C$, associated with $({\mathcal P})$.

\begin{remark}      \label{rem:ISAlop}
Directly from the above constructions, it is possible to prove the
following well-known facts:

(i) $x_0\in R$ is a global solution to $({\mathcal P})$ iff $\immap{x_0}(S)
\cap Q=\varnothing$;

(ii) $x_0\in R$ is a local solution to $({\mathcal P})$ iff there exists
$r>0$ such that $\immap{x_0}(S\cap\ball{x_0}{r})\cap Q=\varnothing$.
\end{remark}

The above facts have been largely employed as a starting point for
formulating optimality conditions within ISA. Another relevant
property connected with optimality is openness at a linear rate.
Its presence, indeed, has been observed to be in contrast with
optimality (see, for instance, the so-called noncovering principle
in \cite{Ioff16}). Below, a lemma related to this phenomenon, which
will be exploited in the proof of the next result, is presented
in full detail.

\begin{lemma}    \label{lem:lopnotsol}
With reference to a problem $({\mathcal P})$, suppose that the mapping
$\immap{x_0}$ is open at a linear rate around $x_0\in R$ and $x_0\in
\inte S$. Then, $x_0$ is not a local solution to $({\mathcal P})$.
\end{lemma}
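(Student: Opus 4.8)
The plan is to argue by contradiction, exploiting the openness of $\immap{x_0}$ at a linear rate to produce a feasible point with strictly smaller objective value than $x_0$, which would refute the assumed local optimality. The noncovering phenomenon invoked in the statement makes this the natural line of attack: if $\immap{x_0}$ covers a whole ball around $\immap{x_0}(x_0)$, then in particular it reaches points of $\R\times\Y$ whose first coordinate is negative while the second coordinate still lies in $C$, i.e. points of the set $Q=(-\infty,0)\times C$. By Remark \ref{rem:ISAlop}(ii), hitting $Q$ with $\immap{x_0}(S\cap\ball{x_0}{r})$ for every $r>0$ is exactly the negation of local optimality.

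First I would record what openness at a linear rate around $x_0$ yields here. Since $\immap{x_0}$ is continuous at $x_0$ (being built from $\varphi$ and $g$, which the problem data makes continuous at $x_0\in R$), Remark \ref{rem:pointlopsimpl}(i) lets me use the simplified inclusion: there are $\delta>0$ and $\sigma>0$ with $\immap{x_0}(\ball{x}{\tau})\supseteq\ball{\immap{x_0}(x)}{\sigma\tau}$ for all $x\in\ball{x_0}{\delta}$ and $\tau\in[0,\delta]$. Applying this at $x=x_0$ gives $\immap{x_0}(\ball{x_0}{\tau})\supseteq\ball{(0,g(x_0))}{\sigma\tau}$ for every small $\tau>0$, because $\immap{x_0}(x_0)=(0,g(x_0))$. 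The key additional input is that $x_0\in\inte S$, so for $\tau$ small enough $\ball{x_0}{\tau}\subseteq S$, whence $\ball{(0,g(x_0))}{\sigma\tau}\subseteq\immap{x_0}(S\cap\ball{x_0}{\tau})$.

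Next I would exhibit a point of $\ball{(0,g(x_0))}{\sigma\tau}$ lying in $Q$. The second coordinate can simply be kept equal to $g(x_0)$, which belongs to $C$ since $x_0\in R$; the first coordinate I push to a small negative number. Concretely, for any $t\in(0,\sigma\tau)$ the point $(-t,g(x_0))$ satisfies $\|(-t,g(x_0))-(0,g(x_0))\|=t<\sigma\tau$, so it lies in the ball, hence in $\immap{x_0}(S\cap\ball{x_0}{\tau})$; and it lies in $Q=(-\infty,0)\times C$ since $-t<0$ and $g(x_0)\in C$. Thus $\immap{x_0}(S\cap\ball{x_0}{\tau})\cap Q\neq\varnothing$ for every sufficiently small $\tau>0$. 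By Remark \ref{rem:ISAlop}(ii), this contradicts $x_0$ being a local solution to $({\mathcal P})$, and the proof is complete.

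The argument is essentially routine once the right pieces are assembled; the only point needing a little care — which I would treat as the main (minor) obstacle — is making sure the two localizations match up, i.e. choosing $\tau$ small enough simultaneously so that $\tau\le\delta$, so that $\ball{x_0}{\tau}\subseteq S$ (possible precisely because $x_0\in\inte S$), and so that the simplified openness inclusion of Remark \ref{rem:pointlopsimpl}(i) is applicable at $x=x_0$. No quantitative estimate on $\sigma$ or $\delta$ is needed, only their positivity, which is exactly what openness at a linear rate provides.
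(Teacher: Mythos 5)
Your proposal is correct and follows essentially the same route as the paper: apply the openness inclusion at $x=x_0$, use $x_0\in\inte S$ to fit the ball $\ball{x_0}{\tau}$ inside $S$, and hit the point $(-t,g(x_0))\in Q$ to contradict local optimality. The only cosmetic difference is that you invoke Remark \ref{rem:pointlopsimpl}(i) (and hence continuity of $\immap{x_0}$, which the lemma does not formally assume), whereas the paper simply takes $r<\zeta/\sigma$ so that the intersection with $\ball{\immap{x_0}(x_0)}{\zeta}$ is absorbed --- which also works in your argument since you only use the inclusion at $x=x_0$, where both balls share the same center.
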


\begin{proof}
By the hypothesis, according to Definition \ref{def:lopmap} there exist
positive constants $\delta$, $\zeta$, and $\sigma$ such that, if taking
in particular $x=x_0$ in inclusion $(\ref{in:pointlop})$, it holds
$$
  \immap{x_0}(\ball{x_0}{r})\supseteq\ball{\immap{x_0}(x_0)}{\sigma r}
  \cap\ball{\immap{x_0}(x_0)}{\zeta},\quad\forall r\in [0,\delta].
$$
Notice that, if $r<\zeta/\sigma$, then the above inclusion reduces to
\begin{equation}    \label{in:prolopx0}
   \immap{x_0}(\ball{x_0}{r})\supseteq\ball{\immap{x_0}(x_0)}{\sigma r}
   =\ball{(0,g(x_0))}{\sigma r}.
\end{equation}
Since $x_0\in\inte S$, there exists $r_0>0$ such that $\ball{x_0}{r_0}
\subseteq S$.
Now, fix an arbitrary $r\in (0,\, \min\{r_0,\, \zeta/\sigma\})$ and
pick $t\in (0,\sigma r)$. Then, on the account of inclusion $(\ref{in:prolopx0})$,
there exists $x_r\in\ball{x_0}{r}$ such that
$$
  \immap{x_0}(x_r)=(-t,g(x_0))\in\ball{(0,g(x_0))}{\sigma r},
$$
that is
$$
  \varphi(x_r)-\varphi(x_0)=-t<0 \qquad\hbox{ and }\qquad
  g(x_r)=g(x_0)\in C.
$$
This means that $x_r\in S\cap g^{-1}(C)$ and $\varphi(x_r)<\varphi(x_0)$,
what contradicts the local optimality of $x_0$ for  $({\mathcal P})$,
by arbitrariness of $r$.
\end{proof}

The next theorem, which extends a similar result established in
\cite[Theorem 3.2]{Uder13}, provides an answer to the question of
solution existence for problem $({\mathcal P})$ and, at the same
time, furnishes an optimality condition for detecting a solution.
In order to formulate such a theorem, let us denote by $\ncone{C}{\bar y}=
\{y^*\in\Y^*:\ \langle y^*,y-\bar y\rangle\le 0,\quad\forall y\in C\}$
the normal cone to $C$ at $\bar y$ in the sense of convex analysis.
Besides, let us denote by $\Lagr:\Y^*\times\X\longrightarrow\R$ the
Lagrangian function associated with problem $({\mathcal P})$, i.e.
$$
   \Lagr (y^*,x)=\varphi(x)+\langle y^*,g(x)\rangle.
$$
The proof, whose main part is given for the sake of completeness,
adapts an argument already exploited in \cite{Uder13}. It derives
solution existence from the weak compactness of the problem
image and the optimality condition by a linear separation technique.
In both the cases, convexity is the geometrical property that
makes this possible.

\begin{theorem}     \label{thm:constoptp}
Given a problem $({\mathcal P})$, let $x_0\in g^{-1}(C)$
and let $c$ be a positive real. Suppose that:

\begin{itemize}

\item[(i)] $(\Y,\|\cdot\|)$ is a reflexive Banach space;

\item[(ii)] $\varphi,\, g\in\Coneone(\inte\ball{x_0}{r_0})$,
for some $r_0>0$ and $\der{\immap{x_0}}{x_0}$ is onto;

\item[(iii)] it holds
\begin{equation}     \label{in:relipuconvcond}
  {\reg{\immap{x_0}}{x_0}\cdot\lip{\dif{\immap{x_0}}}{\inte\ball{x_0}{r_0}}\over 8}
  <c.
\end{equation}
\end{itemize}
Then, there exists $\rho\in (0,r_0)$ such that, for every $S\in
\Uniconvc(\X)$, with $x_0\in\inte S\subseteq\ball{x_0}{\rho}$ and $\immap{x_0}(S)$
closed, one has
\begin{itemize}

\item[(t)] there exists a global solution $\bar x_S\in R$ to $({\mathcal P})$;

\item[(tt)] $\bar x_S\in\fr S$ and hence $\bar x_S\in\fr R$;

\item[(ttt)] there exists $y^*_S\in\ncone{C}{g(\bar x_S)}$ such that
$$
   \Lagr(y^*_S,\bar x_S)=\min_{x\in S}\Lagr(y^*_S,x).
$$
\end{itemize}
\end{theorem}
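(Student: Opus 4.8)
The plan is to derive all three conclusions from Theorem~\ref{thm:extPCP} applied to the image mapping $\immap{x_0}$, together with the separation argument and the non-covering lemma just established. First I would invoke Theorem~\ref{thm:extPCP} with $f=\immap{x_0}$, $\X$ replaced by $\X$ and $\Y$ replaced by $\R\times\Y$: hypotheses (i)--(iii) of the present theorem are exactly the hypotheses of that theorem for $\immap{x_0}$, so there exists $\rho\in(0,r_0)$ such that for every $S\in\Uniconvc(\X)$ with $S\subseteq\ball{x_0}{\rho}$ and $\immap{x_0}(S)$ closed, the image $\immap{x_0}(S)$ lies in $\Uniconv(\R\times\Y)$. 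In particular $\immap{x_0}(S)$ is a closed, convex, bounded (by Remark~\ref{rem:strconvfacts}(i), since a uniformly convex set distinct from the whole space is bounded, and by Remark~\ref{rem:smoothfacts}(ii)) subset of $\R\times\Y$; since $\R\times\Y$ is reflexive by hypothesis (i), $\immap{x_0}(S)$ is weakly compact.

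For conclusion (t): the objective restricted to the feasible region corresponds, in image space, to minimizing the first coordinate of points of $\immap{x_0}(S)$ that lie in $\cl Q=(-\infty,0]\times C$; more precisely, a global solution $\bar x_S$ exists iff the quantity $\mu=\inf\{t:\ (t,y)\in\immap{x_0}(S),\ y\in C\}$ is attained by a feasible point with $t=\mu\le 0$. The set $\immap{x_0}(S)\cap(\R\times C)$ is the intersection of the weakly compact convex set $\immap{x_0}(S)$ with the weakly closed convex set $\R\times C$, hence weakly compact; since $x_0\in R$ gives $(0,g(x_0))\in\immap{x_0}(S)\cap(\R\times C)$, this set is nonempty, and the first-coordinate functional, being weakly continuous (it is a bounded linear functional on $\R\times\Y$), attains its infimum $\mu$ over it at some point $(\mu,\bar y)$. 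Pulling back through $\immap{x_0}$ yields $\bar x_S\in S$ with $\varphi(\bar x_S)-\varphi(x_0)=\mu$, $g(\bar x_S)=\bar y\in C$, so $\bar x_S\in R$; minimality of $\mu$ and the fact that $(0,g(x_0))$ is a competitor force $\mu\le 0$ and that $\bar x_S$ is a global solution to $({\mathcal P})$. For conclusion (tt): if $\bar x_S\in\inte S$, then since $\immap{x_0}$ is open at a linear rate around $x_0$ (Lyusternik--Graves, from hypothesis (ii)), Lemma~\ref{lem:lopnotsol} applies to show $\bar x_S$ (which also lies in $\inte S$) is not even a local solution, a contradiction; hence $\bar x_S\in\fr S$, and since $R=S\cap g^{-1}(C)\subseteq S$ with $\bar x_S\in R$, one gets $\bar x_S\in\fr R$ as well.

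For conclusion (ttt): here convexity of the image does the work. Set $\bar y_S=\immap{x_0}(\bar x_S)=(\mu,g(\bar x_S))$. By conclusion (t) and Remark~\ref{rem:ISAlop}(i), $\immap{x_0}(S)\cap Q=\varnothing$ where $Q=(-\infty,0)\times C$; but here we need a separation taking the value $\mu$ into account, so the right convex set to separate $\immap{x_0}(S)$ from is $Q_\mu=(-\infty,\mu)\times C$, which is open, convex, nonempty, and disjoint from $\immap{x_0}(S)$ by minimality of $\mu$. Since $\immap{x_0}(S)$ is convex, the Hahn--Banach separation theorem yields $(\lambda,y^*)\in\R\times\Y^*$, not both zero, and $\alpha\in\R$ with $\lambda t+\langle y^*,g(x)\rangle\ge\alpha\ge \lambda s+\langle y^*,y\rangle$ for all $(t,\cdot)$ with $t=\varphi(x)-\varphi(x_0)$, $x\in S$, and all $s<\mu$, $y\in C$. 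A standard normalization argument gives $\lambda\ge 0$; the case $\lambda=0$ is excluded because then $y^*$ would separate $g(S)$ from $C$ while $g(x_0)\in C\cap g(S)$ and, using that $x_0\in\inte S$ and the openness of $g$ at $x_0$ (again Lyusternik--Graves applied to the relevant component, or to $\immap{x_0}$), $g(\inte S)$ meets $\inte(\text{something})$ around $g(x_0)$, contradicting a nontrivial separation — this is the delicate point and I would phrase it carefully, perhaps directly via Lemma~\ref{lem:lopnotsol}-type reasoning. With $\lambda>0$ we may rescale to $\lambda=1$; letting $s\uparrow\mu$ and $y$ range over $C$ gives $\langle y^*,y-g(\bar x_S)\rangle\le 0$ for all $y\in C$, i.e. $y^*_S:=y^*\in\ncone{C}{g(\bar x_S)}$, and evaluating the other inequality at $x\in S$ with the constant $\alpha=\mu+\langle y^*,g(\bar x_S)\rangle$ (obtained by plugging in $\bar x_S$, for which equality holds since $\langle y^*,g(\bar x_S)-g(\bar x_S)\rangle=0$ while $\bar x_S$ realizes the boundary value) yields $\varphi(x)-\varphi(x_0)+\langle y^*,g(x)\rangle\ge \mu+\langle y^*,g(\bar x_S)\rangle$, i.e. $\Lagr(y^*_S,x)\ge\Lagr(y^*_S,\bar x_S)$ for all $x\in S$, which is conclusion (ttt).

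The main obstacle I anticipate is the exclusion of the singular multiplier $\lambda=0$ in the separation step: one must use the regularity hypothesis (ii) together with $x_0\in\inte S$ — essentially an interiority/constraint-qualification effect — to rule it out, and making this fully rigorous in the infinite-dimensional, merely-$\Coneone$ setting (where $g(S)$ need not be convex, only $\immap{x_0}(S)$ is, via Theorem~\ref{thm:extPCP}) requires care; the cleanest route is probably to argue that a separating $(0,y^*)$ with $y^*\ne\nullv^*$ would, combined with the openness of $\immap{x_0}$ at a linear rate around $x_0\in\inte S$, produce a feasible point strictly improving on $g(x_0)$'s separation value, contradicting the separation, much in the spirit of Lemma~\ref{lem:lopnotsol}. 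Everything else is bookkeeping with weak compactness in the reflexive space $\R\times\Y$ and routine pull-back through $\immap{x_0}$.
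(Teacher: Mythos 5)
Your proposal follows essentially the same route as the paper: Theorem \ref{thm:extPCP} applied to $\immap{x_0}$ gives a closed, bounded, convex image with nonempty interior; weak compactness in the reflexive space $\R\times\Y$ yields attainment of the minimal first coordinate over the feasible part of the image (the paper minimizes over $\immap{x_0}(S)\cap\cl Q$ after showing $\tau=\bar\tau$ with the help of Lemma \ref{lem:lopnotsol}, you over $\immap{x_0}(S)\cap(\R\times C)$, which is equivalent and slightly more direct); openness at a linear rate places $\bar x_S$ on $\fr S$; and an Eidelheit-type separation produces the multiplier, with the degenerate multiplier ruled out exactly as you indicate by the openness of $\immap{x_0}$ around $x_0\in\inte S$ (the paper leaves this last step as ``standard''). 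The only slip is your claim that $Q_\mu=(-\infty,\mu)\times C$ is open --- it is not unless $C$ is open --- but the separation still goes through because $\immap{x_0}(S)$ has nonempty interior, which is precisely what the paper's invocation of the Eidelheit theorem relies on.
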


\begin{proof}
(t) Under the hypotheses (ii) and (iii), one can apply Theorem \ref{thm:extPCP}.
If $\rho>0$ is as in the thesis Theorem \ref{thm:extPCP}, fix a set $S\in
\Uniconvc(\X)$ satisfying all requirements in the above statement. Then
its image $\immap{x_0}(S)$ turns out to be a convex, closed and bounded subset
of $\R\times\Y$, with nonempty interior. The existence of a global solution
to $({\mathcal P})$ will be achieved by proving that an associated minimization
problem in the space $\R\times\Y$ does admit a global solution.
To do so, define
$$
   \tau=\inf\{t:\ (t,y)\in \immap{x_0}(S)\cap Q\}.
$$
Notice that $x_0\in R$. Since $\der{\immap{x_0}}{x_0}$ is onto, by the
Lyusternik-Graves theorem the mapping $\immap{x_0}$ too is open at a
linear rate around $x_0$. Thus, since $x_0\in\inte S$, in the light
of Lemma \ref{lem:lopnotsol} $x_0$ must fail to be a local (and hence,
a fortiori, global) solution to $({\mathcal P})$. Consequently,
according to what observed in Remark \ref{rem:ISAlop} (i), it must be
$$
  \immap{x_0}(S)\cap Q\ne\varnothing.
$$
This implies that $\tau<+\infty$. Furthermore, if setting
\begin{equation}    \label{eq:deftauisa}
   \bar\tau=\inf\{t:\ (t,y)\in \immap{x_0}(S)\cap \cl Q\},
\end{equation}
it is possible to see that actually it is $\bar\tau=\tau$. Indeed,
since $x_0$ is not a solution to $({\mathcal P})$, there exists
$\hat x\in R$ such that $\varphi(\hat x)-\varphi(x_0)<0$, and so
$\immap{x_0}(\hat x)=(\varphi(\hat x)-\varphi(x_0),g(\hat x))
\in\immap{x_0}(S)\cap Q$. As $\immap{x_0}(S)\cap Q\subseteq
\immap{x_0}(S)\cap \cl Q$, it follows that $\bar\tau\le\tau\le
\varphi(\hat x)-\varphi(x_0)<0$. Hence, for any $\epsilon\in
(0,-\bar\tau)$ there exists $(t_\epsilon,y_\epsilon)\in\immap{x_0}(S)
\cap \cl Q$ such that $t_\epsilon<\bar\tau+\epsilon<0$. Noting
that $\cl Q=(-\infty,0]\times C$, this implies that
$(t_\epsilon,y_\epsilon)\in\immap{x_0}(S)\cap Q$ and
consequently that $\bar\tau\le\tau\le t_\epsilon<\bar\tau+\epsilon<0$.
Letting $\epsilon\to 0^+$, one obtains $\bar\tau=\tau$.

Now, as the set $\immap{x_0}(S)$ is closed, convex and bounded, so
is its subset $\immap{x_0}(S)\cap \cl Q$.  The boundedness of the
latter implies that $\bar\tau>-\infty$. Moreover, by virtue of the
hypothesis (i), $\immap{x_0}(S)\cap \cl Q$ turns out to be weakly
compact. Since the projection mapping $\Pi_\R:\R\times\Y\longrightarrow\R$,
given by $\Pi_\R(t,y)=t$ is continuous and convex, it is also weakly
l.s.c., with the consequence that the infimum defined in $(\ref{eq:deftauisa})$
is actually attained at some $(\bar t,\bar y)\in\immap{x_0}(S)\cap \cl Q$.
This means that there exists $\bar x_S\in S$ such that
$$
   \tau=\bar\tau=\bar t=\varphi(\bar x_S)-\varphi(x_0)\quad
   \hbox{ and }\bar y=g(\bar x_S) \in C.
$$
Let us show that $\bar x_S$ is a global solution to $({\mathcal P})$.
Assume to the contrary that there is $\hat x\in R$ such that $\varphi
(\hat x)<\varphi(\bar x_S)$. Then, one finds
\begin{eqnarray*}
  \hat t &=& \varphi(\hat x)-\varphi(x_0)=\varphi(\hat x)-\varphi(\bar x_S)
  +\varphi(\bar x_S)-\varphi(x_0) \\
  &<& \varphi(\bar x_S)-\varphi(x_0)=\bar t=\bar\tau=\tau.
\end{eqnarray*}
Since it is $\hat x\in R$, then $\hat x\in S$ and $\hat y=g(\hat x)\in C$,
wherefrom one has $(\hat t,\hat y)\in\immap{x_0}(S)\cap Q$, which
contradicts the definition of $\tau$.

(tt) To prove that $\bar x_S$ belongs to $\fr S$, notice that
$(\bar t,\bar y)=\immap{x_0}(\bar x_S)\in\fr\immap{x_0}(S)$.
Then, by recalling what mentioned in Remark \ref{rem:pointlopsimpl} (ii),
this assertion follows from the openness at a linear rate of
$\immap{x_0}$ around $x_0$.

(ttt) Again remembering Remark \ref{rem:ISAlop} (i), by the global
optimality of $\bar x_S$, it results in
\begin{equation}    \label{eq:barxSopt}
  \immap{\bar x_S}(S)\cap Q=\varnothing.
\end{equation}
As one readily checks, it holds
$$
  \immap{\bar x_S}(S)=\immap{x_0}(S)+(\varphi(x_0)-
  \varphi(\bar x_S),\nullv),
$$
that is to say $\immap{\bar x_S}(S)$ is a translation of
$\immap{x_0}(S)$. Therefore, $\immap{\bar x_S}(S)$ too is
a closed, bounded, convex subset of $\R\times\Y$, with nonempty
interior. Since $(\ref{eq:barxSopt})$ is true, the Eidelheit theorem
makes it possible to linearly separate $\immap{\bar x_S}(S)$ and
$\cl Q$. In other terms, this means the existence of a pair
$(\gamma,y^*)\in(\R\times\Y)\backslash \{(0,\nullv^*)\}$ and
$\alpha\in\R$ such that
\begin{equation*}
     \gamma(\varphi(x)-\varphi(\bar x_S))+\langle y^*,g(x)
     \rangle\ge\alpha,\quad\forall x\in S,
\end{equation*}
and
\begin{equation*}
  \gamma t+\langle y^*,y\rangle\le\alpha,\quad\forall
  (t,y)\in\cl Q=(-\infty,0]\times C.
\end{equation*}
The rest of the proof relies on a standard usage of the last
inequalities and does not need to devise any specific adaptation.
\end{proof}

Theorem \ref{thm:constoptp} describes the local behaviour of
a nonlinear optimization problem $({\mathcal P})$ near
a point $x_0\in (\inte S)\cap g^{-1}(C)$, around which the condition
$(\ref{in:relipuconvcond})$ linking the modulus of convexity
of $S$,  the regularity behaviour of $\immap{x_0}$ and the
Lipschitz continuity of its derivative happens to be satisfied:
$({\mathcal P})$ admits a global solution, which lies at the
boundary of the feasible region and can be detected by minimizing
the Lagrangian function. The reader should notice that globality
of a solution and its characterization as a minimizer of a the
Lagrangian function are phenomena typically occurring in convex
optimization. Instead, they generally fail to occur in nonlinear
optimization, where optimality conditions are usually only necessary
or sufficient, and frequently expressed in terms of Lagrangian stationary
by means of first-order derivative.

Another typical phenomenon arising in convex optimization is
the vanishing of the duality gap, i.e. the vanishing of the
value
$$
   \gap({\mathcal P})=\inf_{x\in S}\sup_{y^*\in C^{{}^\ominus}}
   \Lagr(y^*,x)-\sup_{y^*\in C^{{}^\ominus}}\inf_{x\in S}\Lagr(y^*,x),
$$
where $C^{{}^\ominus}=\{y^*\in\Y^*:\ \langle y^*,y\rangle\le 0\}$
is the dual cone to $C$. Such a circumstance, which can be proved to
take place in convex programming under proper qualification conditions,
is known as strong (Lagrangian) duality. In the current setting,
it can be readily achieved as a consequence of Theorem \ref{thm:constoptp},
without the need of extra assumptions, apart from the cone structure now
imposed on the set $C$.

\begin{corollary}
Given a problem $({\mathcal P})$, suppose that $C$ is a closed convex
cone. Under the hypothesis of Theorem \ref{thm:constoptp}, it holds
$$
  \gap({\mathcal P})=0
$$
and there exists a pair $(y^*_S,\bar x_S)\in C^{{}^\ominus}\times R$,
which is a saddle point of $\Lagr$, i.e.
$$
   \Lagr(y^*,\bar x_S)\le\Lagr(y^*_S,\bar x_S)\le
   \Lagr(y^*_S,x),\quad\forall (y^*,x)\in C^{{}^\ominus}\times S.
$$
\end{corollary}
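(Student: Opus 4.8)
The plan is to derive the corollary directly from Theorem \ref{thm:constoptp} together with the classical equivalence, in the convex setting, between strong duality and the existence of a saddle point of the Lagrangian. First I would fix a set $S\in\Uniconvc(\X)$ satisfying all the requirements in the statement of Theorem \ref{thm:constoptp} (so $x_0\in\inte S\subseteq\ball{x_0}{\rho}$ and $\immap{x_0}(S)$ is closed), and invoke conclusions (t), (tt) and (ttt) of that theorem: there is a global solution $\bar x_S\in R$ to $({\mathcal P})$ and a multiplier $y^*_S\in\ncone{C}{g(\bar x_S)}$ with $\Lagr(y^*_S,\bar x_S)=\min_{x\in S}\Lagr(y^*_S,x)$. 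The first step is then to observe that when $C$ is a closed convex cone with apex at $\nullv$, one has $\ncone{C}{g(\bar x_S)}\subseteq C^{\ominus}$ and moreover $\langle y^*_S,g(\bar x_S)\rangle=0$; this is the standard complementarity identity, following from $g(\bar x_S)\in C$, $ty^*_S$-type scaling of the cone, and the definition of the normal cone. Hence $y^*_S\in C^{\ominus}$ and $\Lagr(y^*_S,\bar x_S)=\varphi(\bar x_S)$.

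Next I would establish the two inequalities defining the saddle point. The right-hand inequality $\Lagr(y^*_S,\bar x_S)\le\Lagr(y^*_S,x)$ for all $x\in S$ is exactly conclusion (ttt). For the left-hand inequality $\Lagr(y^*,\bar x_S)\le\Lagr(y^*_S,\bar x_S)$ for all $y^*\in C^{\ominus}$, note that $\Lagr(y^*,\bar x_S)=\varphi(\bar x_S)+\langle y^*,g(\bar x_S)\rangle$ and, since $g(\bar x_S)\in C$ and $y^*\in C^{\ominus}$, we have $\langle y^*,g(\bar x_S)\rangle\le 0=\langle y^*_S,g(\bar x_S)\rangle$; thus $\Lagr(y^*,\bar x_S)\le\varphi(\bar x_S)=\Lagr(y^*_S,\bar x_S)$. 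This proves that $(y^*_S,\bar x_S)$ is a saddle point of $\Lagr$ over $C^{\ominus}\times S$.

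Finally, the vanishing of the duality gap follows from the existence of the saddle point by the classical minimax argument. On one hand, $\sup_{y^*\in C^{\ominus}}\Lagr(y^*,x)=\varphi(x)$ if $x\in R$ (i.e. $g(x)\in C$, so $\langle y^*,g(x)\rangle\le 0$ with supremum $0$ attained at $y^*=\nullv^*$) and $=+\infty$ otherwise; hence the primal value $\inf_{x\in S}\sup_{y^*\in C^{\ominus}}\Lagr(y^*,x)=\inf_{x\in R}\varphi(x)=\varphi(\bar x_S)$ by conclusion (t). On the other hand, the weak duality inequality $\sup_{y^*}\inf_{x}\Lagr(y^*,x)\le\inf_{x}\sup_{y^*}\Lagr(y^*,x)$ always holds, while the saddle point gives $\inf_{x\in S}\Lagr(y^*_S,x)=\Lagr(y^*_S,\bar x_S)=\varphi(\bar x_S)$, so the dual value is at least $\varphi(\bar x_S)$. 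Combining these, primal value $=$ dual value $=\varphi(\bar x_S)$, i.e. $\gap({\mathcal P})=0$.

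The argument is essentially routine; the only point requiring a little care is the complementarity identity $\langle y^*_S,g(\bar x_S)\rangle=0$ and the inclusion $\ncone{C}{g(\bar x_S)}\subseteq C^{\ominus}$, which rely specifically on $C$ being a cone with apex at the origin (for a general convex set these would fail), so I would make sure to spell out that step. Everything else is the standard passage from a Lagrangian saddle point to zero duality gap, and no new quantitative input beyond Theorem \ref{thm:constoptp} is needed.
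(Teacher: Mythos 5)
Your proposal is correct and follows essentially the same route as the paper: the paper's own proof establishes the complementarity identity $\langle y^*_S,g(\bar x_S)\rangle=0$ and the membership $y^*_S\in C^{{}^\ominus}$ by testing the normal-cone inequality at $y=2g(\bar x_S)$ and $y=\nullv$ (exactly your cone-scaling step), and then dismisses the saddle-point and zero-gap conclusions as following ``at once'' --- which is precisely the standard minimax passage you spell out. No discrepancy to report.
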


\begin{proof}
Let $\bar x_S$ and $y^*_S$ be as in the thesis of Theorem \ref{thm:constoptp}.
Since $C$ is a closed convex cone, $2g(\bar x_S)$ and $\nullv$ belong to $C$.
By recalling that $y^*_S\in\ncone{C}{g(\bar x_S)}$, one has
$$
   \langle y^*_S,y-g(\bar x_S)\rangle\le 0,\quad\forall y\in C.
$$
By replacing $y$ with $2g(\bar x_S)$ and $\nullv$ in last inequality, one
easily shows that $\langle y^*_S,g(\bar x_S)\rangle=0$ and hence
$y^*_S\in C^{{}^\ominus}$. The rest of the thesis then follows at once.
\end{proof}

The above applications of Theorem \ref{thm:extPCP} demonstrate that,
even in the absence of convexity assumptions on the functional data
of problem $({\mathcal P})$, some good phenomena connected with convexity
may still appear.

\begin{example}
With reference to the problem format $({\mathcal P})$, let $\X=\R^2$,
$\Y=\R$, $C=\{0\}$, and let $\varphi:\R^2\longrightarrow\R$ and $g:\R^2
\longrightarrow\R$ be defined respectively by
$$
   \varphi(x)=x_1^2-x_2^2,\qquad g(x)=x_1^2+x_2^2-1.
$$
Take $x_0=(1/\sqrt{2},1/\sqrt{2})\in g^{-1}(0)=\Sfer$ and $S=\ball{x_0}{r}$.
With the above choice of data, the problem falls out of the realm of
convex optimization: the objective function $\varphi$ is evidently not convex
as well as the feasible region $R=S\cap\Sfer$, for every $r>0$.
Throughout the present example, $\R^2$ is supposed to be equipped with
its Euclidean space structure, so that
$$
  \delta_{\R^2}(\epsilon)\ge {\epsilon^2\over 8},\quad\forall
  \epsilon\in (0,2].
$$
Therefore, $S=\ball{x_0}{r}\in\Uniconv(\R^2)$ and, according to
the estimate in $(\ref{in:ucmscs})$, one finds
$$
  \delta_{\ball{x_0}{r}}(\epsilon)\ge r\delta_{\R^2}
  \left({\epsilon\over r}\right)={\epsilon^2\over 8r},
$$
that is $\ball{x_0}{r}\in\Uniconv_{1/8r}(\R^2)$, for every $r>0$.
Clearly, the function $\immap{x_0}:\R^2\longrightarrow\R^2$,
which is given in this case by
$$
   \immap{x_0}(x)=\left(\begin{array}{c}
                     x_1^2-x_2^2 \\
                     x_1^2+x_2^2-1
             \end{array}\right),
$$
satisfies the smoothness hypothesis of Theorem
\ref{thm:constoptp}. In particular, since it is
$$
   \der{\immap{x_0}}{x}=\left(\begin{array}{cc}
                     2x_1 & -2x_2 \\
                     2x_1 & x_2
             \end{array}\right),
$$
it results in
$$
   \reg{\immap{x_0}}{x_0}=\|\der{\immap{x_0}}{x_0}^{-1}\|_\mathcal{L}=
   \left\|{1\over 2\sqrt{2}}\left(\begin{array}{rr}
                     1 & 1 \\
                     -1 & 1
                     \end{array}\right)\right\|_\mathcal{L}=
                     {1\over 2}.
$$
On the other hand, since the mapping $\dif{\immap{x_0}}:\R^2\longrightarrow
\mathcal{L}(\R^2,\R^2)$ is linear in this case, one finds
\begin{eqnarray*}
   \lip{\dif{\immap{x_0}}}{\R^2} &=& \|\dif{\immap{x_0}}\|_\mathcal{L}
   =\max_{u\in\Sfer}\|\der{\immap{x_0}}{u}\|_\mathcal{L}  \\
   &=& \max_{u\in\Sfer}\max_{v\in\Sfer}\|\der{\immap{x_0}}{u}v\|
      =2\sqrt{2}.
\end{eqnarray*}
Consequently, the condition $(\ref{in:relipuconvcond})$ becomes
$$
  {{1\over 2}\cdot 2\sqrt{2}\over 8}<{1\over 8r}.
$$
Thus, for every $r<1/\sqrt{2}$, by virtue of Theorem \ref{thm:constoptp}
assertions ${\rm (t)-(ttt)}$ hold. In particular, it is not difficult to check
(for instance, by means of a level set inspection)
that for every $S=\ball{x_0}{r}$, with
$r<1/\sqrt{2}$, the unique (global) solution $\bar x_S$ of the
related problem lies in $\fr S$. Notice that this fails to be true if
$r>\sqrt{2-\sqrt{2}}=\|(0,1)-x_0\|>1/\sqrt{2}$, in which case the
solution $\bar x_S=(0,1)$ belongs to $\inte\ball{x_0}{r}=\inte S$.
\end{example}

\vskip1cm

\begin{center}
{\sc Acknowledgements}
\end{center}

The author thanks an anonymous referee for valuable remarks, which
help him to considerably improve the quality of his paper; in particular,
for pointing out a crucial gap in the original proof of Theorem
\ref{thm:constoptp} as well as several inaccuracies.

\vskip1cm



\begin{thebibliography}{99}

\bibitem{BacSam09} A. Baccari, B. Samet, {\it An extension of Polyak's
theorem in a Hilbert space}, J. Optim. Theory Appl. {\bf 140} (2009), no. 3,
409--418.

\bibitem{BalRep09} M.V. Balashov, D. Repov\u{s}, {\it Uniform convexity
and the splitting problem for selections}, J. Math. Anal. Appl. {\bf 360}
(2009), no. 1, 307--316.

\bibitem{BalRep11} M.V. Balashov, D. Repov\u{s}, {\it Uniformly convex
subsets of the Hilbert space with modulus of convexity of the second
order}, J. Math. Anal. Appl. {\bf 377} (2011), no. 2, 754--761.

\bibitem{BoEmKo04} N.A. Bobylev, S.V. Emel'yanov, S.K. Korovin,
{\it Convexity of images of convex sets under smooth maps.
Nonlinear dynamics and control. 2}, Comput. Math. Model. {\bf 15}
(2004), no. 3, 213--222.

\bibitem{Clar36} J.A. Clarkson, {\it Uniformly convex
spaces}, Trans. Amer. Math. Soc. {\bf 40} (1936), no. 3, 396--414.

\bibitem{Dies75} J. Diestel, {\it Geometry of Banach Spaces
 - Selected Topics}, Springer-Verlag, Berlin-New York, 1975.

\bibitem{Dine41} L.L. Dines, {\it On the mapping of quadratic forms},
Bull. Amer. Math. Soc. {\bf 47} (1941), 494--498.

\bibitem{DmMiOs80} A.V. Dmitruk, A.A. Milyutin, N.P. Osmolovski\u\i,
{\it Lyusternik's theorem and the theory of extrema},
Russian Math. Surveys {\bf 35} (1980), no. 6, 11--51.

\bibitem{DonRoc14} A.L. Dontchev, R.T. Rockafellar, {\it Implicit Functions
and Solution Mappings}, Springer, New York, 2014.

\bibitem{Dyma16} A. Dymarsky, {\it Convexity of a small
ball under quadratic map}, Linear Algebra Appl. {\bf 488}
(2016), 109--123.

\bibitem{Enfl72} P. Enflo, {\it Banach spaces which can be
given an equivalent uniformly convex norm}, Israel J. Math.
{\bf 13} (1972), 281--288.

\bibitem{FaHaHaMoPeZi01} M. Fabian, P. Habala, P. H\'ajek,
V. Montesinos Santaluc\'ia, J. Pelant, V. Zizler,
{\it Functional analysis and infinite-dimensional geometry},
Springer-Verlag, New York, 2001.

\bibitem{Gian05} F. Giannessi, {\it Constrained optimization
and image space analysis. Vol. 1. Separation of sets and optimality
conditions}, Springer, New York, 2005.

\bibitem{Ioff16} A.D. Ioffe, {\it Metric regularity -- a survey Part 1.
Theory}, J. Aust. Math. Soc. {\bf 101} (2016), no. 2, 188--243.

\bibitem{MasRap00} G. Mastroeni, T. Rapcs\'ak, {\it On convex generalized
systems}, J. Optim. Theory Appl. {\bf 104} (2000), no. 3, 605--627.

\bibitem{Milm71} V.D. Milman, {\it Geometric theory of
Banach spaces. II. Geometry of the unit ball}, Uspekhi Mat. Nauk
{\bf 26} (1971), no. 6, 73--149.

\bibitem{Mord06} B.S. Mordukhovich, {\it Variational analysis and generalized
differentiation. I. Basic theory}, Springer-Verlag, Berlin, 2006.

\bibitem{Polo94} E.S. Polovinkin, {\it On properties of strongly
convex sets}, Modeling  of Control and Data Processing Processes,
Mosk. Fiz.-Tekh. Inst. (1994), 182--189.

\bibitem{Polo96} E.S. Polovinkin, {\it Strongly convex analysis},
Sb. Math. {\bf 187} (1996), no. 2, 259--286.

\bibitem{Poly66} B.T. Polyak, {\it Existence theorems and convergence
of minimizing sequences in extremum problems with restrictions},
Soviet Math. {\bf 7} (1966), 72--75.

\bibitem{Poly98} B.T. Polyak, {\it Convexity of quadratic transformations
and its use in control and optimization}, J. Optim. Theory Appl. {\bf 99}
(1998), no. 3, 553--583.

\bibitem{Poly01} B.T. Polyak, {\it Convexity of Nonlinear Image of a Small
Ball with Applications to Optimization}, Set-Valued Anal. {\bf 9} (2001),
159--168.

\bibitem{Poly01b} B.T. Polyak, {\it Local programming},
Zh. Vychisl. Mat. Mat. Fiz. {\bf 41} (2001), no. 9, 1259--1266.

\bibitem{Poly03} B.T. Polyak, {\it  The convexity principle and its
applications},  Bull. Braz. Math. Soc. (N.S.) {\bf 34} (2003), no. 1,
59--75.

\bibitem{Reis07} G. Ra\u\i sig, {\it Convexity of the reachable sets
of control systems}, Autom. Remote Control {\bf 68} (2007), no. 9,
1527--1543.

\bibitem{Uder13} A. Uderzo, {\it On the Polyak convexity principle
and its application to variational analysis}, Nonlinear Anal. {\bf 91}
(2013), 60--71.

\bibitem{Zali02} C. Z\u alinescu, {\it Convex analysis in general
vector spaces}, World Scientific Publishing Co., Inc., River Edge,
NJ, 2002.

\end{thebibliography}
\end{document}